\numberwithin{equation}{section}
\newtheorem{remark}{Remark}[section]
\title{A highly efficient and accurate exponential semi-implicit scalar auxiliary variable (ESI-SAV) approach for dissipative system.
        \thanks{
We would like to acknowledge the assistance of volunteers in putting together this example manuscript and supplement. This work is supported by China Postdoctoral Science Foundation under grant numbers BX20190187, 2019M650152 and 2020M672111, by National Natural Science Foundation of China (Grant Nos: 11931003, 41974133, 11901489, 11971276).}}
      \author{Zhengguang Liu
             \thanks{School of Mathematics and Statistics, Shandong Normal University, Jinan, China. Email: liuzhgsdu@yahoo.com}.
                                       \and
             Xiaoli Li\textsuperscript{*}
             \thanks{Corresponding author: Fujian Provincial Key Laboratory on Mathematical Modeling and High Performance Scientific Computing and School of Mathematical Sciences, Xiamen University, Xiamen, Fujian, 361005, China. Email: xiaolisdu@163.com}. }
\begin{document}
\UseRawInputEncoding
\maketitle

\begin{abstract}
The scalar auxiliary variable (SAV) approach \cite{ShenA} is a very popular and efficient method to simulate various phase field models. To save the computational cost, a new SAV approach is given in \cite{huang2020highly} by introducing a new variable $\theta$. The new SAV approach can be proved to save nearly half CPU time of the original SAV approach while keeping all its other advantages. In this paper, we propose a novel technique to construct an exponential semi-implicit scalar auxiliary variable (ESI-SAV) approach without introducing any extra variables. The new proposed method also only needs to solve one linear equation with constant coefficients at each time step. Furthermore, the constructed ESI-SAV method does not need the bounded below restriction of nonlinear free energy potential which is more reasonable and effective for various phase field models. Meanwhile it is easy to construct first-order, second-order and higher-order unconditionally energy stable time-stepping schemes. Other than that, the ESI-SAV approach can be proved to be effective to solve the non-gradient but dissipative system such as Navier-Stokes equations. Several numerical examples are provided to demonstrate the improved efficiency and accuracy of the proposed method.
\end{abstract}

\begin{keywords}
Phase field models, scalar auxiliary variable, exponential form, energy stability, Navier-Stokes.
\end{keywords}

    \begin{AMS}
         65M12; 35K20; 35K35; 35K55; 65Z05.
    \end{AMS}

\pagestyle{myheadings}
\thispagestyle{plain}
\markboth{ZHENGGUANG LIU AND XIAOLI LI} {ESI-SAV APPROACH FOR DISSIPATIVE SYSTEM}
   %==================================================================
  \section{Introduction}
The phase field models are very important and popular dissipative systems which cover a lot of fields such as alloy casting, new material preparation, image processing, finance and so on \cite{ambati2015review,guo2015thermodynamically,liu2019efficient,liu2020two,marth2016margination,miehe2010phase,shen2015efficient,wheeler1993computation}. Many classical phase field models such as Allen-Cahn model \cite{ainsworth2017analysis,du2019maximum,guan2014second,shen2010numerical,yang2020convergence,zhai2014numerical}, Cahn-Hilliard model \cite{chen2019fast,du2018stabilized,he2007large,shen2010numerical,weng2017fourier,yang2018numerical,zhu1999coarsening} and phase field crystal model \cite{li2018unconditionally,li2019efficient,li2017efficient,liu2020two,yang2017linearly} have been widely used to solve a series of physical problems. Mathematically, the phase field models are generally derived from the functional variation of free energy. In general, the free energy $E(\phi)$ contains the sum of an integral phase of a nonlinear functional and a quadratic term:
\begin{equation}\label{intro-e1}
E(\phi)=\frac12(\phi,\mathcal{L}\phi)+E_1(\phi)=\frac12(\phi,\mathcal{L}\phi)+\int_\Omega F(\phi)d\textbf{x},
\end{equation}
where $\mathcal{L}$ is a symmetric non-negative linear operator, and $E_1(\phi)=\int_\Omega F(\phi)d\textbf{x}$ is nonlinear free energy. $F(\textbf{x})$ is the energy density function. The gradient flow from the energetic variation of the above energy functional $E(\phi)$ in \eqref{intro-e1} can be obtained as follows:
\begin{equation}\label{intro-e2}
\displaystyle\frac{\partial \phi}{\partial t}=-\mathcal{G}\mu,\quad\mu=\displaystyle\mathcal{L}\phi+F'(\phi),
\end{equation}
where $\mu=\frac{\delta E}{\delta \phi}$ is the chemical potential. $\mathcal{G}$ is a positive operator. For example, $\mathcal{G}=I$ for the $L^2$ gradient flow and $\mathcal{G}=-\Delta$ for the $H^{-1}$ gradient flow.

It is not difficult to find that the above phase field system satisfies the following energy dissipation law:
\begin{equation*}
\frac{d}{dt}E=(\frac{\delta E}{\delta \phi},\frac{\partial\phi}{\partial t})=-(\mathcal{G}\mu,\mu)\leq0,
\end{equation*}
which is a very important property for gradient flows in physics and mathematics. From a mathematical point of view, whether the numerical methods can maintain the discrete energy dissipation law is an important stability indicator. Up to now, many scholars considered a series of efficient and popular time discretized approaches to construct energy stable schemes for different phase field models such as convex splitting approach \cite{eyre1998unconditionally,shen2012second,shin2016first}, linear stabilized approach \cite{shen2010numerical,yang2017numerical}, exponential time differencing (ETD) approach \cite{du2019maximum,WangEfficient}, invariant energy quadratization (IEQ) approach \cite{chen2019efficient,chen2019fast,yang2016linear}, scalar auxiliary variable (SAV) approach \cite{xiaoli2019energy,shen2018scalar,ShenA} and so on. There are many different advantages for these methods. In recent years, SAV approach has become a very efficient and powerful way to construct energy stable schemes for gradient flows. Until now, it has been applied successfully to simulate many classical gradient flows such as Allen-Cahn models \cite{yang2020convergence,shen2018scalar}, Cahn-Hilliard models \cite{yang2018numerical,li2019energy}, phase field crystal models \cite{liu2019efficient,li2020stability}, molecular beam epitaxial growth model \cite{YangNumerical,cheng2019highly}, Cahn-Hilliard-Navier-Stokes models \cite{li2019sav} and so on. It is worth mentioning that some fluid mechanics problems such as Navier-Stokes models \cite{li2020new,lin2019numerical,xiaoli2020error} can also be simulated effectively by the SAV approach.

In order to motivate our improvements, we briefly review below the SAV approach. The key of the SAV approach is to transform the nonlinear potential $E_1(\phi)$ into a simple scalar quadratic form. In particular, assuming that $E_1(\phi)$ is bounded from below which means that there exists a constant $C$ to make $E_1(\phi)+C>0$. Define a scalar auxiliary variable
\begin{equation*}
r(t)=\sqrt{E_1(\phi)+C}=\sqrt{\int_\Omega F(\phi)d\textbf{x}+C}>0.
\end{equation*}
We then rewrite the gradient flow \eqref{intro-e2} as the following equivalent system with SAV:
\begin{equation}\label{intro-e3}
  \left\{
   \begin{array}{rll}
\displaystyle\frac{\partial \phi}{\partial t}&=&-\mathcal{G}\mu,\\
\mu&=&\displaystyle\mathcal{L}\phi+\frac{r}{\sqrt{E_1(\phi)+C}}F'(\phi),\\
r_t&=&\displaystyle\frac{1}{2\sqrt{E_1(\phi)+C}}\int_{\Omega}F'(\phi)\phi_td\textbf{x}.
   \end{array}
   \right.
\end{equation}

The above equivalent system \eqref{intro-e3} is very easy to construct linear and unconditional energy stable scheme. For example, a first-order semi-discrete scheme reads as follows
\begin{equation}\label{intro-e4}
  \left\{
   \begin{array}{rll}
\displaystyle\frac{\phi^{n+1}-\phi^{n}}{\Delta t}&=&-\mathcal{G}\mu^{n+1},\\
\mu^{n+1}&=&\displaystyle\mathcal{L}\phi^{n+1}+\frac{r^{n+1}}{\sqrt{E_1(\phi^{n})+C}}F'(\phi^{n}),\\
\displaystyle\frac{r^{n+1}-r^n}{\Delta t}&=&\displaystyle\frac{1}{2\sqrt{E_1(\phi^{n})+C}}\int_{\Omega}F'(\phi^{n})\frac{\phi^{n+1}-\phi^{n}}{\Delta t}d\textbf{x}.
   \end{array}
   \right.
\end{equation}

It is not difficult to obtain the following discrete dissipation law:
\begin{equation*}
\aligned
\left[\frac12(\mathcal{L}\phi^{n+1},\phi^{n+1})+|r^{n+1}|^2\right]-\left[\frac12(\mathcal{L}\phi^{n},\phi^{n})+|r^{n}|^2\right]\leq-\Delta t(\mathcal{G}\mu^{n+1},\mu^{n+1})\leq0.
\endaligned
\end{equation*}
No restriction on the time step is required.

Although the SAV method is very concise and effective, there still are several shortcomings need modifying and improving. In general, the scholars consider two aspects to modify the traditional SAV approach. One is to change the definition of the introduced SAV. In \cite{yang2020roadmap}, the authors introduced the generalized auxiliary variable method for devising energy stable schemes for general dissipative systems. A new Lagrange Multiplier approach for gradient flows is considered in \cite{cheng2020new}. We also propose an exponential SAV approach in \cite{liu2020exponential} to modify the traditional method to construct energy stable schemes. Another way of the modified method is to construct new algorithms to save the computational cost. For the traditional SAV approach, we general need to compute an inner product $(b^{n},\phi^{n+1})$ before obtaining $\phi^{n+1}$ in \cite{shen2018scalar} where $b^n=\frac{F'(\phi^{n})}{\sqrt{E_1(\phi^{n})+C}}$. A more efficiency way to obtain solutions is to compute the following two linear equations with constant coefficients by setting $\phi^{n+1}=\phi_1^{n+1}+r^{n+1}\phi_2^{n+1}$:
\begin{equation*}
\aligned
(I+\Delta t\mathcal{G}\mathcal{L})\phi_1^{n+1}=\phi^n,\quad(I+\Delta t\mathcal{G}\mathcal{L})\phi_2^{n+1}=-\Delta t\mathcal{G}b^n.
\endaligned
\end{equation*}

In \cite{huang2020highly}, the authors consider a new technique by replacing the controlling factor $\xi=\frac{r}{\sqrt{E_1(\phi)+C}}$ by $\theta+(1-\theta)\xi$ where $\theta=1+O(\Delta t^k),$ $k\geq1$ to construct new SAV energy stable schemes. The new SAV approach only requires solving one linear system with constant coefficients at each time step.

In order to show and give a comparative study for our fast SAV approach, we provide below a brief review of the new SAV approach in \cite{huang2020highly} to construct energy stable schemes for gradient flows. Assume that the energy $E(\phi)$ is bounded from below which means that there is a constant $C>0$ to satisfy $E(\phi)+C>0$. We introduce a scalar auxiliary variable $R(t)=E(\phi)+C$ which
satisfies the following dynamical equation
\begin{equation*}
\aligned
\frac{dR(t)}{dt}=\frac{dE}{dt}=-(\mathcal{G}\mu,\mu)\leq0.
\endaligned
\end{equation*}
Define $\xi(t)=\frac{R(t)}{E(\phi)+C}$ and note that $\xi(t)\equiv1$ at a continuous level. We can then rewrite the gradient flow \eqref{intro-e2} as the following equivalent system with SAV
\begin{equation}\label{intro-e5}
  \left\{
   \begin{array}{rll}
\displaystyle\frac{\partial \phi}{\partial t}&=&-\mathcal{G}\mu,\\
\mu&=&\displaystyle\mathcal{L}\phi+\left[\theta+(1-\theta)\xi\right]F'(\phi),\\
R_t&=&-\xi(\mathcal{G}\mu,\mu),
   \end{array}
   \right.
\end{equation}
where $\theta(t)$ can be an arbitrary function at the continuous level. A first-order semi-implicit scheme can be given as follows:
\begin{equation}\label{intro-e6}
  \left\{
   \begin{array}{l}
\displaystyle\frac{\phi^{n+1}-\left[\theta^n+(1-\theta^n)\xi^{n+1}\right]\phi^{n}}{\Delta t}=-\mathcal{G}\mu^{n+1},\\
\mu^{n+1}=\displaystyle\mathcal{L}\phi^{n+1}+\left[\theta^n+(1-\theta^n)\xi^{n+1}\right]F'(\phi^{n}),\\
\displaystyle\frac{R^{n+1}-R^n}{\Delta t}=-\xi^{n+1}(\mathcal{G}\overline{\mu}^{n+1},\overline{\mu}^{n+1}),\\
\xi^{n+1}=\displaystyle\frac{R^{n+1}}{E(\overline{\phi}^{n+1})+C}.
   \end{array}
   \right.
\end{equation}

By giving an arbitrary function $\theta(t)$ to satisfy $\theta=1+O(\Delta t)$, we can direct to observe that
\begin{equation}\label{intro-e7}
\displaystyle\frac{\phi^{n+1}-\left[\theta^n+(1-\theta^n)\xi^{n+1}\right]\phi^{n}}{\Delta t}=
\displaystyle\frac{\phi^{n+1}-\phi^{n}}{\Delta t}+\frac{O(\Delta t)(1-\xi^{n+1})}{\Delta t}=\left.\frac{\partial \phi}{\partial t}\right|^{n+1}+O(\Delta t).
\end{equation}
Combining the first two equations in \eqref{intro-e6}, we can obtain the following linear matrix equation
\begin{equation*}
\aligned
(I+\Delta t\mathcal{G}\mathcal{L})\phi_1^{n+1}=\left[\theta^n+(1-\theta^n)\xi^{n+1}\right](\phi^n-\Delta t\mathcal{G}F'(\phi^n)),
\endaligned
\end{equation*}
which means that we only require solving one linear equation with constant coefficients (see more details in \cite{huang2020highly}).

By introducing a new SAV $\theta$, the new SAV approach enjoys the following remarkable properties: (1) it only requires solving one linear system with constant coefficients at each time step; (2) it only requires the energy functional $E(\phi)$ be bounded from below; (3) it is extendable to higher-order BDF type energy stable schemes. However, it also may bring some new problems. For any $\theta=1+O(\Delta t^m)$, we have $\theta+(1-\theta)\xi=1+O(\Delta t^m)+O(\Delta t^m)\xi$. For the new $k$th order BDF$k$ scheme, we will discretize the nonlinear term in \eqref{intro-e5} as follows
\begin{equation}\label{intro-e8}
\left[\theta^n+(1-\theta^n)\xi^{n+1}\right]F'(\phi^{*,n+1})=F'(\phi^{*,n+1})+\left[O(\Delta t^k)+O(\Delta t^k)\xi^{n+1}\right]F'(\phi^{*,n+1}).
\end{equation}
As we can see, the new introduced variable $\theta$ has to be discrete explicitly when considering fast computation. The only implicit part $O(\Delta t^k) \xi^{n+1}F^{\prime}(\phi^{*,n+1})$ in the above scheme is much weaker than the traditional SAV approach. If $\theta$ is not be given efficiently, the convergence accuracy of the new method may not be as good as that of the traditional SAV method. We need to focus on the following two issues: (1) whether the variable $\theta$ is essential to construct fast calculation scheme. (2) whether the requirement of the nonlinear energy functional $E_1(\phi)$ be bounded from
below could be removed.

In this paper, we propose a novel technique to construct a highly efficient and accurate exponential semi-implicit scalar auxiliary variable (ESI-SAV) appraoch. The main contributions in our work lie in the fact that the new proposed ESI-SAV method only needs to solve one linear equation with constant coefficients at each time step without imposing the bounded below restriction of nonlinear free energy potential which is more reasonable and effective for various dissipative systems. Meanwhile it is easy to construct first-order, second-order and higher-order unconditionally energy stable time-stepping schemes. Other than that, the new proposed ESI-SAV approach can be proved to be very easy to solve the non-gradient but dissipative system such as Navier-Stokes equation. To our knowledge, there is no careful research on one step but energy stable and accurate SAV scheme without imposing the bounded below restriction.

Compared with our original E-SAV approach in \cite{liu2020exponential}, the new proposed ESI-SAV method has two main advantages: (1) the implicit discrete of the introduced scalar auxiliary variable $R$ guarantees that $R$ is bounded from below which adding the stability and robustness of the discrete scheme; (2) the new ESI-SAV approach is very easy to construct higher-order unconditionally energy stable time-stepping schemes.

The paper is organized as follows. In Sect.2, we consider a new procedure to obtain an energy stable ESI-SAV approach and construct first-order, second-order and higher-order time discrete schemes. In Sect.3, we use the same technique of the proposed ESI-SAV approach to solve the general nonlinear dissipative system suche as Navier-Stokes equations. Finally, in Sect.4, various 2D numerical simulations are demonstrated to verify the accuracy and efficiency of our proposed schemes.

\section{ESI-SAV approach for phase field models}
In this section, we will consider an exponential semi-implicit scalar auxiliary variable (ESI-SAV) approach for phase field models to construct energy stable numerical schemes. Exponential function is a special function that keeps the range constant positive. Thus, we introduce an exponential scalar auxiliary variable (E-SAV):
\begin{equation}\label{esav-e1}
\aligned
R(t)=\exp\left(E(\phi)\right)=\exp\left((\phi,\mathcal{L}\phi)+\int_\Omega F(\phi)d\textbf{x}\right).
\endaligned
\end{equation}
It is obviously $R(t)>0$ for any $t$. Then, the nonlinear functional $F'(\phi)$ can be transformed into the following equivalent formulation:
\begin{equation*}
F'(\phi)=\frac{R}{R}F'(\phi)=\frac{R}{\exp\left(E(\phi)\right)}F'(\phi).
\end{equation*}

Thus, \eqref{intro-e3} can be rewritten as the following equivalent system:
\begin{equation}\label{esav-e2}
  \left\{
   \begin{array}{rll}
\displaystyle\frac{\partial \phi}{\partial t}&=&\mathcal{G}\mu,\\
\mu&=&\displaystyle\mathcal{L}\phi+\frac{R}{\exp\left(E(\phi)\right)}F'(\phi),\\
\displaystyle\frac{dR}{dt}&=&-R(\mathcal{G}\mu,\mu).
   \end{array}
   \right.
\end{equation}

Noting that $R=\exp(E(\phi))>0$, we are easy to obtain the following inequality from the third equation in \eqref{esav-e2}:
\begin{equation*}
\frac{dR}{dt}=-R(\mathcal{G}\mu,\mu)=-\exp(E(\phi))(\mathcal{G}\mu,\mu)\leq0.
\end{equation*}

Noticing that $\ln(R)=\ln(\exp(E(\phi)))=E(\phi)$, we can obtain the original energy dissipation law:
\begin{equation*}
\frac{dE}{dt}=\frac{d\ln(R)}{dt}=\frac{1}{R}\frac{dR}{dt}=-(\mathcal{G}\mu,\mu)\leq0.
\end{equation*}

Next, we will consider some numerical schemes to illustrate that the proposed ESI-SAV approach is very easy to obtain linear and unconditionally energy stable schemes. More importantly, it can be found that both first-order, second-order and some higher-order semi-implicit numerical schemes with unconditionally energy stability can be constructed easily.

Before giving a semi-discrete formulation, we let $N>0$ be a positive integer and set
\begin{equation*}
\Delta t=T/N,\quad t^n=n\Delta t,\quad \text{for}\quad n\leq N.
\end{equation*}
\subsection{The first-order scheme}
A first-order scheme for solving the system \eqref{esav-e2} can be readily derived by the backward Euler method. The first-order scheme can be written as follows:
\begin{equation}\label{esav-first-e1}
  \left\{
   \begin{array}{rll}
\displaystyle\frac{\phi^{n+1}-\phi^{n}}{\Delta t}&=&\mathcal{G}\mu^{n+1},\\
\mu^{n+1}&=&\displaystyle\mathcal{L}\phi^{n+1}+\frac{R^{n+1}}{\exp\left(E(\phi^n)\right)}F'(\phi^n),\\
\displaystyle\frac{R^{n+1}-R^n}{\Delta t}&=&-R^{n+1}(\mathcal{G}\overline{\mu}^n,\overline{\mu}^n),
   \end{array}
   \right.
\end{equation}
with the initial conditions
\begin{equation*}
\phi^0=\phi_0(x,t),\quad R^0=\exp(E(\phi^0)).
\end{equation*}

Firstly, we obtain $\overline{\mu}^n$ as follows
\begin{equation}\label{esav-first-e2}
\overline{\mu}^{n}=\mathcal{L}\phi^{n}+F'(\phi^n).
\end{equation}

Then the scalar auxiliary variable $R^{n+1}$ can be solved by the third equation in \eqref{esav-first-e1}:
\begin{equation}\label{esav-first-e3}
R^{n+1}=\frac{R^n}{1+\Delta t(\mathcal{G}\mu^n,\mu^n)}.
\end{equation}

Then $\phi^{n+1}$ can be solved by the following linear equation:
\begin{equation}\label{esav-first-e4}
(I-\Delta t\mathcal{G}\mathcal{L})\phi^{n+1}=\phi^n+\Delta t\frac{R^{n+1}}{\exp\left(E(\phi^n)\right)}\mathcal{G}F'(\phi^n).
\end{equation}

To summarize, the first-order scheme \eqref{esav-first-e1} can be implemented as follows:
\begin{enumerate}
  \item[] $\bullet$ set $\overline{\mu}^n=\mathcal{L}\phi^{n}+F'(\phi^n)$ and compute $R^{n+1}$ from \eqref{esav-first-e3};
  \item[] $\bullet$ update $\phi^{n+1}$ from \eqref{esav-first-e4} and go to the next time step.
\end{enumerate}
The first-order ESI-SAV scheme \eqref{esav-first-e1} is much easier to implement than the traditional SAV. We observe that $\phi^{n+1}$ and $R^{n+1}$ can be solved step by step which means the above procedure only requires solving one linear equation with constant coefficients as in a standard semi-implicit scheme. As for the energy stability, we have the following theorem.
\begin{theorem}\label{esav-th1}
Given $R^n>0$, we then obtain $R^{n+1}>0$. The scheme \eqref{esav-first-e1} for the equivalent phase field system \eqref{esav-e2} is unconditionally energy stable in the sense that
\begin{equation*}
\aligned
R^{n+1}-R^n=-\Delta tR^{n+1}(\mathcal{G}\overline{\mu}^n,\overline{\mu}^n)\leq0.
\endaligned
\end{equation*}
and more importantly we have
\begin{equation*}
\aligned
\ln R^{n+1}-\ln R^n\leq0.
\endaligned
\end{equation*}
\end{theorem}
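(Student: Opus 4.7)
The plan is to treat the third equation of \eqref{esav-first-e1} as a scalar linear relation in $R^{n+1}$ alone and exploit the fact that $\overline{\mu}^n$ is fully determined by $\phi^n$ through \eqref{esav-first-e2}. Solving for $R^{n+1}$ gives
\begin{equation*}
R^{n+1}=\frac{R^n}{1+\Delta t(\mathcal{G}\overline{\mu}^n,\overline{\mu}^n)}.
\end{equation*}
Since $\mathcal{G}$ is a positive operator, the inner product $(\mathcal{G}\overline{\mu}^n,\overline{\mu}^n)$ is non-negative, so the denominator is at least $1$. Combined with the hypothesis $R^n>0$, this immediately yields $R^{n+1}>0$, settling the positivity claim.

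Next, with $R^{n+1}>0$ available, the identity $R^{n+1}-R^n=-\Delta t\,R^{n+1}(\mathcal{G}\overline{\mu}^n,\overline{\mu}^n)$ is just the third equation of \eqref{esav-first-e1} rewritten. The right-hand side is $-\Delta t$ times a product of two non-negative quantities, hence $\leq 0$, which gives the first inequality of the theorem.

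For the logarithmic dissipation, I would appeal to the explicit ratio derived in the first step: $R^{n+1}/R^n = 1/\bigl(1+\Delta t(\mathcal{G}\overline{\mu}^n,\overline{\mu}^n)\bigr)\leq 1$. Taking $\ln$ (well-defined since both quantities are positive) and using the monotonicity of the logarithm yields $\ln R^{n+1}-\ln R^n=\ln\bigl(R^{n+1}/R^n\bigr)\leq 0$. This is precisely the discrete analogue of the continuous identity $dE/dt=d\ln R/dt\leq 0$ that was verified in the paragraph preceding the scheme, and it shows that the scheme dissipates the original (not merely modified) energy at the discrete level.

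I do not anticipate a real obstacle: everything reduces to manipulating one scalar linear equation, and the two ingredients needed—positivity of $\mathcal{G}$ and of $R^n$—are hypotheses. The only point worth highlighting in the write-up is that the argument is self-contained on the scalar variable and never requires solving for $\phi^{n+1}$, which is what makes the step-by-step decoupling described after \eqref{esav-first-e4} consistent with unconditional stability.
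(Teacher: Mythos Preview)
Your proposal is correct and follows essentially the same approach as the paper: solve the scalar third equation of \eqref{esav-first-e1} to obtain $R^{n+1}=R^n/\bigl(1+\Delta t(\mathcal{G}\overline{\mu}^n,\overline{\mu}^n)\bigr)$, deduce positivity from $(\mathcal{G}\overline{\mu}^n,\overline{\mu}^n)\geq 0$, then read off both inequalities. The only cosmetic difference is that the paper deduces $\ln R^{n+1}\leq \ln R^n$ directly from $R^{n+1}\leq R^n$ via monotonicity of $\ln$, whereas you pass through the ratio $R^{n+1}/R^n\leq 1$; these are equivalent.
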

\begin{proof}
Using the definition of $R(t)$, we can obtain $R^0=\exp(\phi^0)>0$. Noting that $R^{n+1}=\frac{R^n}{1+\Delta t(\mathcal{G}\overline{\mu}^n,\overline{\mu}^n)}$, we can immediately obtain $R^{n+1}>0$ for all $n\geq1$ because of $R^0>0$ and $(\mathcal{G}\overline{\mu}^n,\overline{\mu}^n)\geq0$.

Combining the inequality $R^{n+1}>0$ with the third equation in scheme \eqref{esav-first-e1}, we can easy to obtain the following modified energy stability:
\begin{equation*}
\aligned
R^{n+1}-R^n=-\Delta tR^{n+1}(\mathcal{G}\overline{\mu}^n,\overline{\mu}^n)\leq0.
\endaligned
\end{equation*}
Noticing that $E(\phi)=\ln(\exp(E(\phi)))=\ln(R)$ and the logarithm function is strictly monotone increasing function, we can also obtain the following energy stability:
\begin{equation*}
\aligned
\ln R^{n+1}-\ln R^n\leq0.
\endaligned
\end{equation*}
\end{proof}
\begin{remark}\label{essav-re1}
To obtain $R^1$, we need to simulate $\frac{dE}{dt}$ as $-(\mathcal{G}\overline{\mu}^0,\overline{\mu}^0)$ from the scheme \eqref{esav-first-e1}. It is seemly not reasonable to obtain a good approximation for $\frac{dE}{dt}$ at $t^1$ if we just use $\phi^0$ or the initial energy $E^0$. An appropriate way to simulate $\frac{dE}{dt}$ as $-(\mathcal{G}\overline{\mu}^1,\overline{\mu}^1)$ where $\overline{\mu}^1=\mathcal{L}\overline{\phi}^{1}+F'(\overline{\phi}^1)$ and $(I-\Delta t\mathcal{G}\mathcal{L})\overline{\phi}^{1}=\phi^0+\Delta t\mathcal{G}F'(\phi^0)$.
\end{remark}
\begin{remark}\label{esav-re2}
To prevent the solution "blowing up" because of the exponential function increasing rapidly, we can add a positive constant $C$ to redefine the exponential scalar auxiliary variable:
\begin{equation*}
\aligned
R(t)=\exp\left(\frac{E(\phi)}{C}\right)=\exp\left(\frac{1}{C}(\phi,\mathcal{L}\phi)+\frac{1}{C}\int_\Omega F(\phi)d\textbf{x}\right).
\endaligned
\end{equation*}
The energy dissipation law is also keep original at the continuous level although a positive constant $C$ is added to the variable $R$:
\begin{equation*}
\frac{dE}{dt}=\frac{Cd\ln(R)}{dt}=\frac{C}{R}\frac{dR}{dt}=-(\mathcal{G}\mu,\mu)\leq0.
\end{equation*}

\end{remark}
\subsection{The second-order Crank-Nicolson scheme}
In this subsection, we will consider a linear, second-order, sequentially solved and unconditionally stable exponential semi-implicit SAV (ESI-SAV) scheme. Firstly, we introduce a new variable $\xi=\frac{R}{\exp(E(\phi))}$. It is obviously $\xi\equiv1$ at the continuous level. Meanwhile, $\xi(2-\xi)$ is also equal to 1 at the continuous level. Then the phase field system \eqref{intro-e3} can be rewritten as the following equivalent system:
\begin{equation}\label{esav-second-system}
  \left\{
   \begin{array}{rll}
\displaystyle\frac{\partial \phi}{\partial t}&=&\mathcal{G}\mu,\\
\mu&=&\displaystyle\mathcal{L}\phi+\xi(2-\xi)F'(\phi),\\
\xi&=&\displaystyle\frac{R}{\exp(E(\phi))},\\
\displaystyle\frac{dR}{dt}&=&-R(\mathcal{G}\mu,\mu).
   \end{array}
   \right.
\end{equation}
The above new equivalent system is also keep the original energy dissipation law:
\begin{equation*}
\frac{dE}{dt}=\frac{d\ln(R)}{dt}=\frac{1}{R}\frac{dR}{dt}=-(\mathcal{G}\mu,\mu)\leq0.
\end{equation*}

A linear, exponential semi-implicit SAV scheme based on the second order Crank-Nicolson formula (CN) for \eqref{esav-second-system} reads as: for $n\geq1$,
\begin{equation}\label{esav-second-e1}
  \left\{
   \begin{array}{rll}
\displaystyle\frac{\phi^{n+1}-\phi^{n}}{\Delta t}&=&\mathcal{G}\mu^{n+\frac12},\\
\mu^{n+\frac12}&=&\displaystyle\mathcal{L}\frac{\phi^{n+1}+\phi^{n}}{2}+\xi^{n+1}(2-\xi^{n+1})F'(\phi^{*,n+\frac{1}{2}}),\\
\xi^{n+1}&=&\displaystyle\frac{R^{n+1}}{\exp(\phi^{*,n+\frac{1}{2}})},\\
\displaystyle\frac{R^{n+1}-R^n}{\Delta t}&=&-R^{n+1}(\mathcal{G}\overline{\mu}^{n+\frac{1}{2}},\overline{\mu}^{n+\frac{1}{2}}),
   \end{array}
   \right.
\end{equation}
where $\phi^{*,n+\frac{1}{2}}$ is any explicit $O(\Delta t^2)$ approximation for $\phi(t^{n+\frac{1}{2}})$ which can be flexible according to the problem. Here, we choose
\begin{equation}\label{esav-second-e2}
\aligned
&\phi^{*,n+\frac{1}{2}}=\frac32\phi^n-\frac12\phi^{n-1}, \quad n\geq1,\\
\endaligned
\end{equation}
and for $n=0$, we compute $\phi^{*,n+\frac{1}{2}}$ as follows:
\begin{equation}\label{esav-second-e3}
\aligned
&\displaystyle\frac{\phi^{*,\frac{1}{2}}-\phi^0}{(\Delta t)/2}=\mathcal{G}\left[\mathcal{L}\phi^{*,\frac{1}{2}}+F^{'}(\phi^0)\right],
\endaligned
\end{equation}
which has a local truncation error of $O(\Delta t^2)$.

Noting that the fourth discrete scheme for $R$ in \eqref{esav-second-e1} is a first-order scheme which means $R^{n+1}=R(t^{n+1})+O(\Delta t)$, we then obtain
\begin{equation*}
\xi^{n+1}=\xi(t^{n+1})+C_1\Delta t=1+C_1\Delta t.
\end{equation*}
Then, we can obtain the following equation
\begin{equation*}
\xi^{n+1}(2-\xi^{n+1})=(1+C_1\Delta t)(1-C_1\Delta t)=1-C_1^2\Delta t^2.
\end{equation*}
which means the nonlinear term of $\xi$ in the second equation in \eqref{esav-second-e1} can be treated as the second-order approximation of 1.

The Crank-Nicolson scheme also enjoys the same stability as the first-order scheme \eqref{esav-first-e1}, namely, we can prove the following result using exactly the same procedure.
\begin{theorem}\label{esav-th2}
Given $R^n>0$, we then obtain $R^{n+1}>0$. The second-order Crank-Nicolson scheme \eqref{esav-second-e1} is unconditionally energy stable in the sense that
\begin{equation*}
\aligned
R^{n+1}-R^n=-\Delta tR^{n+1}(\mathcal{G}\overline{\mu}^{n+\frac{1}{2}},\overline{\mu}^{n+\frac{1}{2}})\leq0,
\endaligned
\end{equation*}
and
\begin{equation*}
\aligned
\ln R^{n+1}-\ln R^n\leq0.
\endaligned
\end{equation*}
\end{theorem}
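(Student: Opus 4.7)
The plan is to mirror exactly the proof of Theorem~\ref{esav-th1}, since the fourth equation in the Crank-Nicolson scheme \eqref{esav-second-e1} has the same algebraic structure as its first-order counterpart \eqref{esav-first-e3}: the only differences are that the midpoint quantity $\overline{\mu}^{n+\frac{1}{2}}$ replaces $\overline{\mu}^n$, and the temporal index on $R$ on the right-hand side is different. Neither of these matters for the stability argument, which is purely algebraic.

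First, I would establish positivity of $R^{n+1}$. Rearranging the fourth equation of \eqref{esav-second-e1} as
\begin{equation*}
R^{n+1}\bigl(1+\Delta t\,(\mathcal{G}\overline{\mu}^{n+\frac{1}{2}},\overline{\mu}^{n+\frac{1}{2}})\bigr)=R^n,
\end{equation*}
and using that $\mathcal{G}$ is a positive operator so that $(\mathcal{G}\overline{\mu}^{n+\frac{1}{2}},\overline{\mu}^{n+\frac{1}{2}})\geq 0$, the bracket on the left is bounded below by $1$. Hence the inductive assumption $R^n>0$ (with base case $R^0=\exp(E(\phi^0))>0$) yields $R^{n+1}>0$.

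Second, with positivity in hand, the modified energy identity is immediate: the fourth equation of \eqref{esav-second-e1} rewrites as
\begin{equation*}
R^{n+1}-R^n=-\Delta t\,R^{n+1}(\mathcal{G}\overline{\mu}^{n+\frac{1}{2}},\overline{\mu}^{n+\frac{1}{2}})\leq 0,
\end{equation*}
because each factor on the right is nonnegative. Finally, since $0<R^{n+1}\leq R^n$ and $\ln(\cdot)$ is strictly monotone increasing on $(0,\infty)$, one concludes $\ln R^{n+1}-\ln R^n\leq 0$, which, recalling $\ln R = E(\phi)$ at the continuous level, is the discrete analog of the original energy dissipation law.

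I do not anticipate any genuine obstacle: the first two equations of \eqref{esav-second-e1} (the momentum and chemical potential equations) and the explicit construction of $\phi^{*,n+\frac{1}{2}}$ via \eqref{esav-second-e2}--\eqref{esav-second-e3} play no role in the stability argument, since the scalar update for $R$ decouples from $\phi^{n+1}$. The only subtle point worth flagging, but not actually needed for the stated inequalities, is that the variable $\xi^{n+1}(2-\xi^{n+1})$ introduced in the nonlinear term is a second-order approximation of $1$, as already noted in the discussion preceding the theorem; this is a \emph{consistency} remark and does not enter the stability proof itself.
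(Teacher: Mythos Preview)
Your proposal is correct and follows exactly the approach indicated in the paper, which states that ``we can prove the following result using exactly the same procedure'' as Theorem~\ref{esav-th1}. The argument you give---solving the decoupled scalar update for $R^{n+1}$, deducing positivity from $R^n>0$ and $(\mathcal{G}\overline{\mu}^{n+\frac12},\overline{\mu}^{n+\frac12})\geq 0$, and then invoking monotonicity of $\ln$---is precisely the intended proof.
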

\begin{remark}\label{esav-re1}
The second-order ES-SAV scheme \eqref{esav-second-e1} based on Crank-Nicolson can be implemented sequentially as follows:
\begin{enumerate}
  \item[(i)] Compute the initial values of $\phi^0$ and $R^0=\exp(E(\phi^0))$;
  \item[(ii)] Compute $\phi^{*,\frac{1}{2}}$ from \eqref{esav-second-e3} and set n=0;
  \item[(iii)] set $\overline{\mu}^{n+\frac{1}{2}}=\mathcal{L}\phi^{*,n+\frac{1}{2}}+F'(\phi^{*,n+\frac{1}{2}})$ and compute $R^{n+1}$ from \eqref{esav-first-e3};
  \item[(iv)] set n=n+1;
  \item[(v)] Compute $\xi^n$ from the third equation in \eqref{esav-second-e1};
  \item[(vi)] Compute $\phi^n$ from the first equation in \eqref{esav-second-e1};
  \item[(vii)] Compute $\phi^{*,n+\frac{1}{2}}$ from \eqref{esav-second-e2} and go back to step (iii).
\end{enumerate}
\end{remark}
\subsection{The high-order BDF$k$ scheme}
From above second-order Crank-Nicolson scheme \eqref{esav-second-e1}, we observe that if we give a proper discretization of $\xi$, the first-order scheme for $R$ doesn't have any effect on the order of $\phi$. we can achieve overall $k$th-order accuracy coupled with $k$-step BDF for $\phi$ by using just a first-order approximation for $R$. To give a unified framework for the high-order BDF$k$ scheme uniformly and concisely, we introduce a new function $V(\xi)$ which is equal to 1 at the continuous level. Then, we can obtain the following equivalent system of \eqref{intro-e3}:
\begin{equation}\label{esav-BDF-system}
  \left\{
   \begin{array}{rll}
\displaystyle\frac{\partial \phi}{\partial t}&=&\mathcal{G}\mu,\\
\mu&=&\displaystyle\mathcal{L}\phi+V(\xi)F'(\phi),\\
\xi&=&\displaystyle\frac{R}{\exp(E(\phi))},\\
\displaystyle\frac{dR}{dt}&=&-R(\mathcal{G}\mu,\mu).
   \end{array}
   \right.
\end{equation}
Set $V(\xi)=\xi(2-\xi)$, then it is not difficult to obtain the BDF2 scheme:
for $n\geq1$,
\begin{equation}\label{esav-BDF-e1}
   \begin{array}{rll}
\displaystyle\frac{3\phi^{n+1}-4\phi^{n}+\phi^{n-1}}{2\Delta t}&=&\mathcal{G}\mu^{n+1},\\
\mu^{n+1}&=&\displaystyle\mathcal{L}\phi^{n+1}+V(\xi^{n+1})F'(\phi^{*,n+1}),\\
V(\xi^{n+1})&=&\xi^{n+1}(2-\xi^{n+1}),\\
\xi^{n+1}&=&\displaystyle\frac{R^{n+1}}{\exp(\phi^{*,n+1})},\\
\displaystyle\frac{R^{n+1}-R^n}{\Delta t}&=&-R^{n+1}(\mathcal{G}\overline{\mu}^{n+1},\overline{\mu}^{n+1}),
   \end{array}
\end{equation}
where $\phi^{*,n+1}$ is any explicit $O(\Delta t^2)$ approximation for $\phi(t^{n+1})$ which can be flexible according to the problem. Here, we choose
\begin{equation}\label{esav-BDF-e2}
\aligned
&\phi^{*,n+1}=2\phi^n-\phi^{n-1}, \quad n\geq1,\\
&\overline{\mu}^{n+1}=\mathcal{L}\phi^{*,n+1}+F^{'}(\phi^{*,n+1}), \quad n\geq1.
\endaligned
\end{equation}
Set $V(\xi)=\xi(3-3\xi+\xi^2)$, then we can obtain the following BDF3 scheme: for $n\geq2$,
\begin{equation}\label{esav-BDF-e3}
   \begin{array}{rll}
\displaystyle\frac{11\phi^{n+1}-18\phi^{n}+9\phi^{n-1}-2\phi^{n-2}}{6\Delta t}&=&\mathcal{G}\mu^{n+1},\\
\mu^{n+1}&=&\displaystyle\mathcal{L}\phi^{n+1}+V(\xi^{n+1})F'(\phi^{*,n+1}),\\
V(\xi^{n+1})&=&\xi^{n+1}[3-3\xi^{n+1}+(\xi^{n+1})^2],\\
\xi^{n+1}&=&\displaystyle\frac{R^{n+1}}{\exp(\phi^{*,n+1})},\\
\displaystyle\frac{R^{n+1}-R^n}{\Delta t}&=&-R^{n+1}(\mathcal{G}\overline{\mu}^{n+1},\overline{\mu}^{n+1}),
   \end{array}
\end{equation}
where $\phi^{*,n+1}$ is any explicit $O(\Delta t^2)$ approximation for $\phi(t^{n+1})$ which can be chosen as follows
\begin{equation}\label{esav-BDF-e4}
\aligned
&\phi^{*,n+1}=3\phi^n-3\phi^{n-1}+\phi^{n-2}, \quad n\geq2.
\endaligned
\end{equation}
Set $V(\xi)=\xi(2-\xi)(2-2\xi+\xi^2)$, then we can also obtain the following BDF4 scheme: for $n\geq3$,
\begin{equation}\label{esav-BDF-e5}
   \begin{array}{rll}
\displaystyle\frac{25\phi^{n+1}-48\phi^{n}+36\phi^{n-1}-16\phi^{n-2}+3\phi^{n-3}}{12\Delta t}&=&\mathcal{G}\mu^{n+1},\\
\mu^{n+1}&=&\displaystyle\mathcal{L}\phi^{n+1}+V(\xi^{n+1})F'(\phi^{*,n+1}),\\
V(\xi^{n+1})&=&\xi^{n+1}(2-\xi^{n+1})[2-2\xi^{n+1}+(\xi^{n+1})^2],\\
\xi^{n+1}&=&\displaystyle\frac{R^{n+1}}{\exp(\phi^{*,n+1})},\\
\displaystyle\frac{R^{n+1}-R^n}{\Delta t}&=&-R^{n+1}(\mathcal{G}\overline{\mu}^{n+1},\overline{\mu}^{n+1}),
   \end{array}
\end{equation}
where $\phi^{*,n+1}$ is any explicit $O(\Delta t^2)$ approximation for $\phi(t^{n+1})$ which can be chosen as follows
\begin{equation}\label{esav-BDF-e6}
\aligned
&\phi^{*,n+1}=4\phi^n-6\phi^{n-1}+4\phi^{n-2}-\phi^{n-3}, \quad n\geq3.
\endaligned
\end{equation}
\begin{lemma}
The discretization of $V(\xi^{n+1})$ has the $k$-th accuracy of $1$ for the BDF$k$ scheme.
\end{lemma}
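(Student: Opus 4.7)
The plan is to exploit a hidden polynomial structure of $V$ together with the first-order consistency of the update for $R$. My first step would be to unify the three cases into one clean identity: for each $k\in\{2,3,4\}$, the prescribed $V(\xi)$ in the BDF$k$ scheme satisfies
\[
V(\xi)=1-(1-\xi)^k.
\]
This is a direct expansion. For $k=2$, $\xi(2-\xi)=1-(1-\xi)^2$. For $k=3$, $\xi(3-3\xi+\xi^2)=1-(1-\xi)^3$. For $k=4$, factor $\xi(2-\xi)=1-(1-\xi)^2$ and $2-2\xi+\xi^2=1+(1-\xi)^2$; their product is $1-(1-\xi)^4$.

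My second step would be to bound $\xi^{n+1}-1$. The last equation of each BDF$k$ scheme is a backward-Euler relation for $R$, so it has local truncation error $O(\Delta t)$, giving $R^{n+1}=R(t^{n+1})+O(\Delta t)$. Since $R(t)=\exp(E(\phi(t)))$ at the continuous level and $\phi^{*,n+1}$ is chosen as an explicit $O(\Delta t^k)$ approximation of $\phi(t^{n+1})$, smoothness of $\exp\circ E$ yields $\exp(E(\phi^{*,n+1}))=R(t^{n+1})+O(\Delta t^k)$. Forming the ratio $\xi^{n+1}=R^{n+1}/\exp(E(\phi^{*,n+1}))$ then gives $\xi^{n+1}=1+O(\Delta t)$.

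Setting $\eta:=\xi^{n+1}-1=O(\Delta t)$ and substituting into the identity of Step 1,
\[
V(\xi^{n+1})=1-(-\eta)^k=1+O(\Delta t^k),
\]
which is the claim. The substantive observation is the identification $V(\xi)=1-(1-\xi)^k$; once this is recognized, the lemma reduces to a one-line Taylor estimate. I expect the only place requiring care to be the bookkeeping in Step 2, since it composes three $O(\cdot)$ estimates (truncation of the $R$-update, extrapolation error for $\phi^{*,n+1}$, and local Lipschitz behaviour of $\exp\circ E$ near $R(t^{n+1})$), but none of these is essentially difficult.
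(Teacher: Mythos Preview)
Your proof is correct and follows essentially the same route as the paper. The paper verifies each case separately by factoring, e.g., $\xi(3-3\xi+\xi^2)=[1-(1-\xi)][1+(1-\xi)+(1-\xi)^2]=1-(1-\xi)^3$ and then substituting $\xi^{n+1}=1+C_1\Delta t$, while you state the unified identity $V(\xi)=1-(1-\xi)^k$ up front; the underlying algebra and the use of $\xi^{n+1}-1=O(\Delta t)$ are identical.
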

\begin{proof}
For the BDF2 scheme \eqref{esav-BDF-e1}, we have proved $V(\xi^{n+1})=\xi^{n+1}(2-\xi^{n+1})=1-C_1^2\Delta t^2.$ For the BDF3 scheme \eqref{esav-BDF-e3}, we have
\begin{equation*}
\aligned
V(\xi^{n+1})
&=\xi^{n+1}[3-3\xi^{n+1}+(\xi^{n+1})^2]\\
&=[1-(1-\xi^{n+1})][1+(1-\xi^{n+1})+(1-\xi^{n+1})^2]\\
&=(1-C_1\Delta t)[1+C_1\Delta t+(C_1\Delta t)^2]\\
&=1-C_1^3(\Delta t)^3.
\endaligned
\end{equation*}
And for the BDF4 scheme \eqref{esav-BDF-e5}, we have
\begin{equation*}
\aligned
V(\xi^{n+1})
&=\xi^{n+1}(2-\xi^{n+1})[2-2\xi^{n+1}+(\xi^{n+1})^2]\\
&=[1-(1-\xi^{n+1})][1+(1-\xi^{n+1})][1+(1-\xi^{n+1})^2]\\
&=(1-C_1\Delta t)(1+C_1\Delta t)[1+(C_1\Delta t)^2]\\
&=[1-C_1^2(\Delta t)^2][1+C_1^2(\Delta t)^2]\\
&=1-C_1^4(\Delta t)^4,
\endaligned
\end{equation*}
which completes the proof.
\end{proof}

The BDF$k$ schemes \eqref{esav-BDF-e1}-\eqref{esav-BDF-e5} also enjoy the same stability as the first-order scheme \eqref{esav-first-e1}, namely, we can prove the following result using exactly the same procedure.
\begin{theorem}\label{esav-th3}
Given $R^n>0$, we then obtain $R^{n+1}>0$. The BDF$k$ ESI-SAV schemes \eqref{esav-BDF-e1}-\eqref{esav-BDF-e5} are all unconditionally energy stable in the sense that
\begin{equation*}
\aligned
R^{n+1}-R^n=-\Delta tR^{n+1}(\mathcal{G}\overline{\mu}^{n+1},\overline{\mu}^{n+1})\leq0.
\endaligned
\end{equation*}
and
\begin{equation*}
\aligned
\ln R^{n+1}-\ln R^n\leq0.
\endaligned
\end{equation*}
\end{theorem}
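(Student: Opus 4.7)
The plan is to observe that in each of the three BDF$k$ schemes \eqref{esav-BDF-e1}, \eqref{esav-BDF-e3}, and \eqref{esav-BDF-e5}, the update for the auxiliary variable $R$ has exactly the same structure, namely
\begin{equation*}
\frac{R^{n+1}-R^n}{\Delta t} = -R^{n+1}(\mathcal{G}\overline{\mu}^{n+1},\overline{\mu}^{n+1}),
\end{equation*}
so the stability argument reduces to the one already used for Theorems \ref{esav-th1} and \ref{esav-th2} and is completely independent of the BDF order $k$, of the specific polynomial $V(\xi^{n+1})$, and of the extrapolation $\phi^{*,n+1}$ chosen for the $\phi$-equation.

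First I would solve the discrete $R$-equation algebraically for $R^{n+1}$, obtaining
\begin{equation*}
R^{n+1} = \frac{R^n}{1+\Delta t\,(\mathcal{G}\overline{\mu}^{n+1},\overline{\mu}^{n+1})}.
\end{equation*}
Since $\mathcal{G}$ is a positive operator, $(\mathcal{G}\overline{\mu}^{n+1},\overline{\mu}^{n+1})\geq 0$, so the denominator is bounded below by $1$. Combined with the initialization $R^0=\exp(E(\phi^0))>0$ and a straightforward induction on $n$, this gives $R^{n+1}>0$ for all $n\geq 0$; the hypothesis $R^n>0$ in the statement feeds directly into the inductive step.

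Second, once positivity of $R^{n+1}$ is secured, multiplying the discrete $R$-equation by $\Delta t$ yields immediately
\begin{equation*}
R^{n+1} - R^n = -\Delta t\,R^{n+1}(\mathcal{G}\overline{\mu}^{n+1},\overline{\mu}^{n+1}) \leq 0,
\end{equation*}
which is the first claimed inequality. For the logarithmic form, since both $R^{n+1}$ and $R^n$ lie in $(0,\infty)$ and $\ln$ is strictly increasing there, the inequality $R^{n+1}\leq R^n$ gives $\ln R^{n+1}-\ln R^n\leq 0$ at once.

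There is no real obstacle here: the stability rests entirely on the structure of the discrete $R$-equation, which the authors deliberately kept identical (a backward-Euler type discretization with implicit $R^{n+1}$ on the right-hand side) across the BDF2, BDF3, and BDF4 variants. The higher-order accuracy is carried solely by the multistep $\phi$-equation and by the polynomial $V(\xi^{n+1})$, neither of which appears in the energy bound. Consequently the proof is essentially a verbatim repetition of the arguments used for Theorems \ref{esav-th1} and \ref{esav-th2}, applied uniformly to all three BDF$k$ schemes.
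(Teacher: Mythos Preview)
Your proposal is correct and matches the paper's approach exactly: the paper does not give a separate proof for Theorem~\ref{esav-th3} but simply states that the BDF$k$ schemes ``enjoy the same stability as the first-order scheme \eqref{esav-first-e1}'' and that the result follows ``using exactly the same procedure.'' Your observation that the $R$-update is structurally identical across all BDF$k$ variants, so that the argument of Theorem~\ref{esav-th1} carries over verbatim, is precisely the point.
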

\section{ESI-SAV approach for non-gradient but dissipative system}
In this section, we will show that the same technique of the proposed ESI-SAV approach can be used to solve the general nonlinear dissipative system. Consider a domain $\Omega$ in two or three dimensions and a dissipative system on this domain, whose dynamics is described by
\begin{equation}\label{nds-e1}
\aligned
\displaystyle\frac{\partial \textbf{u}}{\partial t}+A\textbf{u}+f(\textbf{u})=0,
\endaligned
\end{equation}
where $\textbf{u}(x,t)$ denotes the state variables of the system, $A$ is an elliptic operator. $f(\textbf{u})$ is an operator that gives rise to the dissipative dynamics of the system and generally would be nonlinear. The above system satisfies the following energy dissipative law
\begin{equation}\label{nds-e2}
\aligned
\displaystyle\frac{dE(\textbf{u})}{dt}=-(\mathcal{G}\textbf{u},\textbf{u}),
\endaligned
\end{equation}
where $\mathcal{G}$ is a positive operator.

The proposed ESI-SAV approach is very natural and efficient to solve this problem. Define an exponential scalar auxiliary variable:
\begin{equation*}
\aligned
R(t)=\exp\left(E(\textbf{u})\right).
\endaligned
\end{equation*}

We can rewrite the dissipative system \eqref{nds-e1} with $R(t)$ as follows
\begin{equation}\label{nds-e3}
\aligned
&\displaystyle\frac{\partial \textbf{u}}{\partial t}+A\textbf{u}+V(\xi)f(\textbf{u})=0,\\
&\displaystyle\frac{dR(t)}{dt}=-R(t)(\mathcal{G}\textbf{u},\textbf{u}),\\
&\displaystyle\xi=\frac{R(t)}{\exp\left(E(\textbf{u})\right)},
\endaligned
\end{equation}
where $V(\xi)$ is a function of $\xi$ which is different with different schemes.

A second-order scheme for solving the system \eqref{nds-e3} can be readily derived by the Crank-Nicolson method as follows:
\begin{equation}\label{nds-e4}
  \left\{
   \begin{array}{rll}
&\displaystyle\frac{\textbf{u}^{n+1}-\textbf{u}^n}{\Delta t}+A\textbf{u}^{n+1}+V(\xi^{n+1})f(\textbf{u}^{n+\frac{1}{2}})=0,\\
&\displaystyle\frac{R^{n+1}-R^n}{\Delta t}=-R^{n+1}(\mathcal{G}\overline{\textbf{u}}^{n+\frac{1}{2}},\overline{\textbf{u}}^{n+\frac{1}{2}}),\\
&\displaystyle\xi^{n+1}=\frac{R^{n+1}}{\exp\left(E(\textbf{u}^n)\right)},\\
&V(\xi^{n+1})=\xi^{n+1}(1-\xi^{n+1}),
   \end{array}
   \right.
\end{equation}
with the initial conditions
\begin{equation*}
\textbf{u}^0=\textbf{u}_0(x,t),\quad R^0=\exp(E(\textbf{u}^0)).
\end{equation*}

Similarly, Noting that $R^0>0$, we can obtain $R^n>0$ for all $n\geq0$. The second-order scheme \eqref{nds-e4} is unconditionally energy stable in the sense that
\begin{equation*}
\aligned
R^{n+1}-R^n=-\Delta tR^{n+1}(\mathcal{G}\overline{\textbf{u}}^{n+\frac{1}{2}},\overline{\textbf{u}}^{n+\frac{1}{2}})\leq0.
\endaligned
\end{equation*}
and
\begin{equation*}
\aligned
\ln R^{n+1}-\ln R^n\leq0.
\endaligned
\end{equation*}

We can also give energy stable second-order and higher-order schemes as before, but for the sake of brevity, we omit the detailed formula here.

Next, let's take the classic Navier-Stokes equation for example. Consider the following incompressible Navier-Stokes equations in $\Omega\times J$:
\begin{equation}\label{ns-e1}
\aligned
&\displaystyle\frac{\partial\textbf{u}}{\partial t}+\textbf{u}\cdot\nabla\textbf{u}-\nu\Delta\textbf{u}+\nabla p=0,\\
&\nabla\cdot\textbf{u}=0,\\
&\textbf{u}|_{\partial \Omega}=0,
\endaligned
\end{equation}
where the domain $\Omega$ is in two or three dimensions with a sufficiently smooth boundary and $J=(0, T]$, $\textbf{u}$ and $p$ are the normalized velocity and pressure, $\nu>0$ denotes the inverse of the Reynolds number.

Consider the total energy $E(\textbf{u})=\int_{\Omega}\frac12|\textbf{u}|^2$, then the above system satisfies the following property:
\begin{equation}\label{ns-e2}
\aligned
\displaystyle\frac{dE}{dt}=(\textbf{u}_t,\textbf{u})=-(\nu\nabla\textbf{u},\nabla\textbf{u}),
\endaligned
\end{equation}
which means the above Navier-Stokes equations are energy dissipative system.

Next, we will consider an energy stable numerical methods based on proposed ESI-SAV approach. Introduce a similar exponential scalar auxiliary variable:
\begin{equation}\label{ns-e3}
\aligned
R(t)=\exp\left(E(\textbf{u})\right)=\exp\left(\int_{\Omega}\frac12|\textbf{u}|^2\right).
\endaligned
\end{equation}
We then rewrite the incompressible Navier-Stokes equations \eqref{ns-e1} as the following equivalent system with SAV:
\begin{equation}\label{ns-e4}
\aligned
&\displaystyle\frac{\partial\textbf{u}}{\partial t}+\frac{R(t)}{\exp\left(E(\textbf{u})\right)}\textbf{u}\cdot\nabla\textbf{u}-\nu\Delta\textbf{u}+\nabla p=0,\\
&\nabla\cdot\textbf{u}=0,\\
&\textbf{u}|_{\partial \Omega}=0,\\
&\displaystyle\frac{dR(t)}{dt}=-R(t)(\nu\nabla\textbf{u},\nabla\textbf{u}).
\endaligned
\end{equation}

The first-order semi-discrete energy stable ESI-SAV version can be written as follows: given $\textbf{u}^0$ and $R^0=\exp(E(\textbf{u}^0))$, find $(u^{n+1},p^{n+1},R^{n+1})$ by solving
\begin{equation}\label{ns-e5}
\aligned
&\displaystyle\frac{\textbf{u}^{n+1}-\textbf{u}^{n}}{\Delta t}+\frac{R^{n+1}}{\exp\left(E(\textbf{u}^n)\right)}\textbf{u}^n\cdot\nabla\textbf{u}^n-\nu\Delta\textbf{u}^{n+1}+\nabla p^{n+1}=0,\\
&\nabla\cdot\textbf{u}^{n+1}=0,\\
&\textbf{u}^{n+1}|_{\partial \Omega}=0,\\
&\displaystyle\frac{R^{n+1}-R^n}{\Delta t}=-R^{n+1}(\nu\nabla\overline{\textbf{u}}^n,\nabla\overline{\textbf{u}}^n).
\endaligned
\end{equation}
\section{Examples and discussion}
In this section, several numerical examples are given to demonstrate the accuracy, energy stability and efficiency of the proposed exponential semi-implicit SAV schemes when applying to the some classical phase field models such as the Allen-Cahn model, Cahn-Hilliard model, phase field crystal model and so on. A comparative study in accuracy of the traditional SAV approach, the new SAV approach in \cite{huang2020highly}, usual semi-implicit approach and our proposed exponential semi-implicit SAV approach are considered to show the accuracy and efficiency. In all examples, we consider the periodic boundary conditions and use a Fourier spectral method in space.

\subsection{Allen-Cahn and Cahn-Hilliard equations}
Allen-Cahn and Cahn-Hilliard equations are very classical phase field models and have been widely used in many fields involving physics, materials science, finance and image processing \cite{chen2018accurate,chen2018power,du2018stabilized}.

In general, both Allen-Cahn for $\mathcal{G}=I$ and Cahn-Hilliard models for $\mathcal{G}=-\Delta$ can be given as follows
\begin{equation}\label{section5_e_model}
  \left\{
   \begin{array}{rlr}
\displaystyle\frac{\partial \phi}{\partial t}&=-\mathcal{G}\mu,     &(\textbf{x},t)\in\Omega\times J,\\
                                          \mu&=-\epsilon^2\Delta \phi+F'(\phi),&(\textbf{x},t)\in\Omega\times J,
   \end{array}
   \right.
\end{equation}
where $J=(0,T]$, $\mu=\frac{\delta E}{\delta \phi}$ is the chemical potential and the energy $E$ is the following Lyapunov energy functional:
\begin{equation}\label{section5_energy1}
E(\phi)=\int_{\Omega}(\frac{\epsilon^2}{2}|\nabla \phi|^2+F(\phi))d\textbf{x},
\end{equation}
where the most commonly used form Ginzburg-Landau double-well type potential is defined as $F(\phi)=\frac{1}{4}(\phi^2-1)^2$.

At first, we give the following example to test the accuracy and efficiency of the proposed ESI-SAV. The numerical schemes we tested and compared are the traditional SAV, the new SAV in \cite{huang2020highly}, usual semi-implicit and the proposed ESI-SAV approaches. We use the Fourier spectral Galerkin method for spatial discretization with $N=128$. The true solution is unknown and we therefore use the Fourier Galerkin approximation in the case $\Delta t=1e-4$ as a reference solution.

\textbf{Example 1}: Consider the above Allen-Cahn equation in $\Omega=[0,2\pi]^2$ with $\epsilon=0.1$, and the following initial condition \cite{ShenA}:
\begin{equation*}
\aligned
\phi(x,y,0)=0.1\cos(x)\cos(y).
\endaligned
\end{equation*}

We first set $T=10$ and use the first-order time discrete schemes based on SAV approach, new SAV approach in \cite{huang2020highly}, usual semi-implicit approach and our proposed exponential semi-implicit SAV approach. For the new SAV scheme, we need to give $\theta=1+\Delta t$ to satisfy $\theta=1+O(\Delta t)$. The computational error and convergence rates are shown in Table \ref{tab:tab1}. One can see that both accuracy and convergence rates for the SAV approach, the new SAV approach and semi-implicit (SEMI) approach are nearly same. The considered ESI-SAV method seemly has better accuracy. The second-order CN and BDF2 schemes, the third-order BDF3 scheme and the fourth-order BDF4 scheme based on the proposed ESI-SAV method are also given in Figure \ref{fig:fig2} for $T=2$, where we can observe the expected convergence rate of the field variable $\phi$ for all cases. In Figure \ref{fig:fig1}, we give the time evolution of $1-V(\xi)$ for the ESI-SAV CN, BDF2, BDF3 and BDF4 schemes with $\Delta t=0.1$. The errors between 1 and $V(\xi)$ show that the introduced functions $V(\xi)$ are close enough to 1. Figure \ref{fig:energy} shows the time histories of the modified energy $R$ and $\ln R$, and the original energy $E$ obtained by the current ESI-SAV method using time step $\Delta t=0.01$. It can be observed that all the energy history curves decrease dramatically at the beginning and level off gradually, indicating the stability of the proposed method.
\begin{table}[h!b!p!]
\small
\centering
\caption{\small The $L^\infty$ errors, convergence rates for first-order scheme in time for SAV approach, new SAV approach with $\theta=1+\Delta t$, usual semi-implicit (SEMI) approach and exponential semi-implicit SAV (ESI-SAV) approach of Allen-Cahn equation with $T=10$. }\label{tab:tab1}
\begin{tabular}{|c|c|c|c|c|c|c|c|c|}
\hline
$T=10$&\multicolumn{2}{c|}{SAV}&\multicolumn{2}{c|}{NSAV}&\multicolumn{2}{c|}{SEMI}&\multicolumn{2}{c|}{ESI-SAV}\\
\cline{1-9}
$\Delta t$&Error&Rate&Error&Rate&Error&Rate&Error&Rate\\
\cline{1-9}
$\frac12$      &1.0961e-2   &---   &1.1674e-2   &---    &1.1643e-2   &---   &2.5907e-3   &---   \\
$\frac14$      &2.6521e-3   &2.0471&3.3084e-3   &1.8191 &3.2859e-3   &1.8251&1.2967e-3   &0.9985\\
$\frac18$      &1.0350e-3   &1.3573&1.2287e-3   &1.4290 &1.2069e-3   &1.4449&6.4678e-4   &1.0034\\
$\frac{1}{16}$ &4.9722e-4   &1.0576&5.3544e-4   &1.1983 &5.1386e-4   &1.2318&3.2103e-4   &1.0105\\
$\frac{1}{32}$ &2.4608e-4   &1.0147&2.5558e-4   &1.0669 &2.3431e-4   &1.1329&1.5796e-4   &1.0231\\
$\frac{1}{64}$ &1.1988e-4   &1.0375&1.2962e-4   &0.9795 &1.0909e-4   &1.1028&7.6388e-5   &1.0481\\
\hline
\end{tabular}
\end{table}
\begin{table}[h!b!p!]
\small
\centering
\caption{\small The $L^\infty$ errors, convergence rates for second-order CN and BDF2 schemes, the third-order BDF3 scheme and the fourth-order BDF4 scheme based on the proposed ESI-SAV method of Allen-Cahn equation with $T=2$. }\label{tab:tab2}
\begin{tabular}{|c|c|c|c|c|c|c|c|c|}
\hline
$T=2$&\multicolumn{2}{c|}{CN}&\multicolumn{2}{c|}{BDF2}&\multicolumn{2}{c|}{BDF3}&\multicolumn{2}{c|}{BDF4}\\
\cline{1-9}
$\Delta t$&Error&Rate&Error&Rate&Error&Rate&Error&Rate\\
\cline{1-9}
$\frac14$      &4.5903e-2   &---   &1.2860e-1   &---    &2.9698e-2   &---   &6.2867e-3   &---   \\
$\frac18$      &1.3035e-2   &1.8162&3.9131e-2   &1.7165 &4.7478e-3   &2.6450&4.4096e-4   &3.8336\\
$\frac116$     &3.4522e-3   &1.9168&1.0742e-2   &1.8650 &6.6255e-4   &2.8411&2.8414e-5   &3.9559\\
$\frac{1}{32}$ &8.8602e-4   &1.9621&2.8060e-3   &1.9366 &8.7241e-5   &2.9249&1.7923e-6   &3.9867\\
$\frac{1}{64}$ &2.2371e-4   &1.9857&7.1463e-4   &1.9732 &1.1182e-5   &2.9638&1.1233e-7   &3.9959\\
$\frac{1}{128}$&5.5587e-5   &2.0088&1.7833e-4   &2.0026 &1.4124e-6   &2.9849&6.9969e-9   &4.0048\\
\hline
\end{tabular}
\end{table}
\begin{figure}[htp]
\centering
\includegraphics[width=8cm,height=6cm]{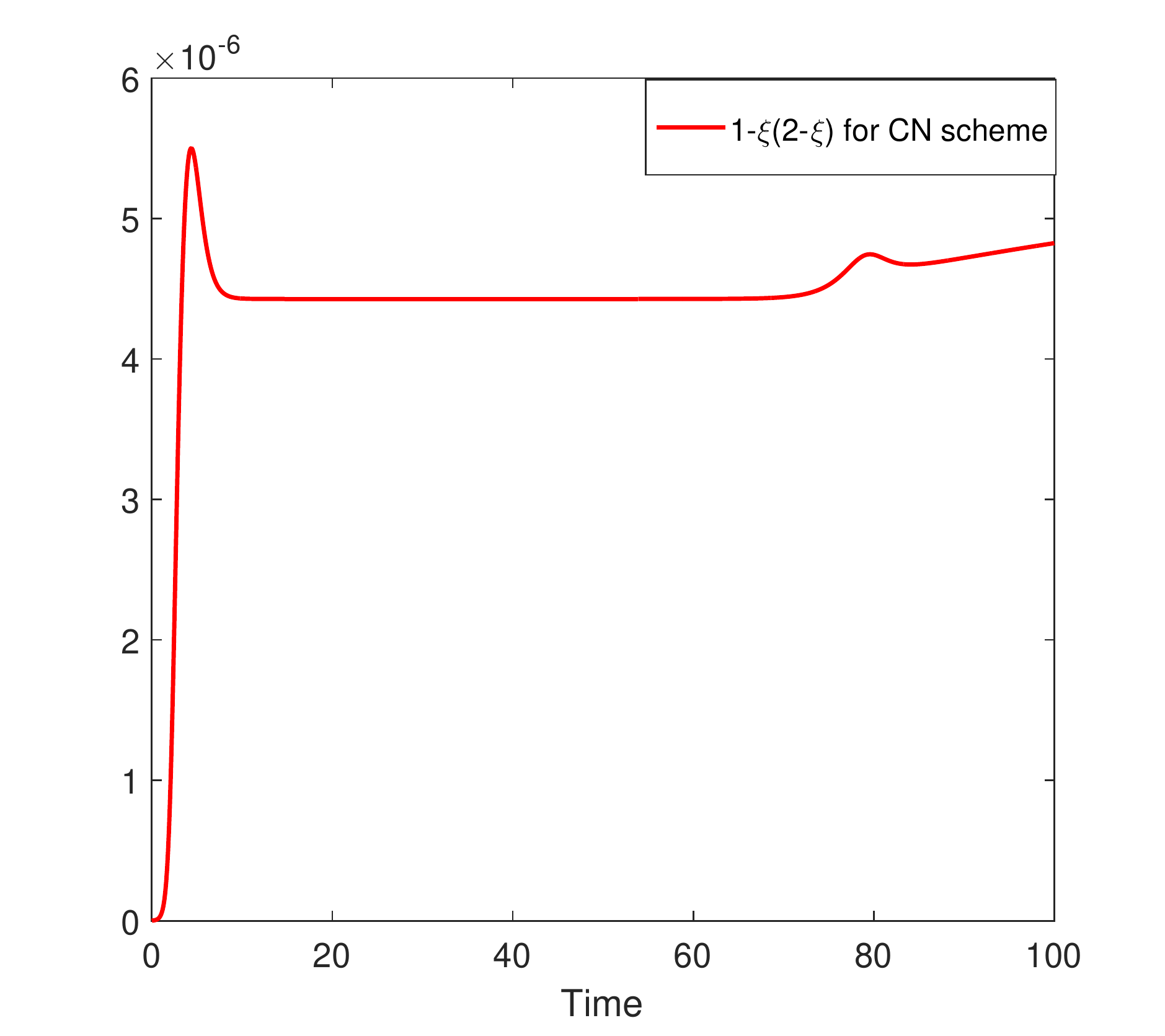}
\includegraphics[width=8cm,height=6cm]{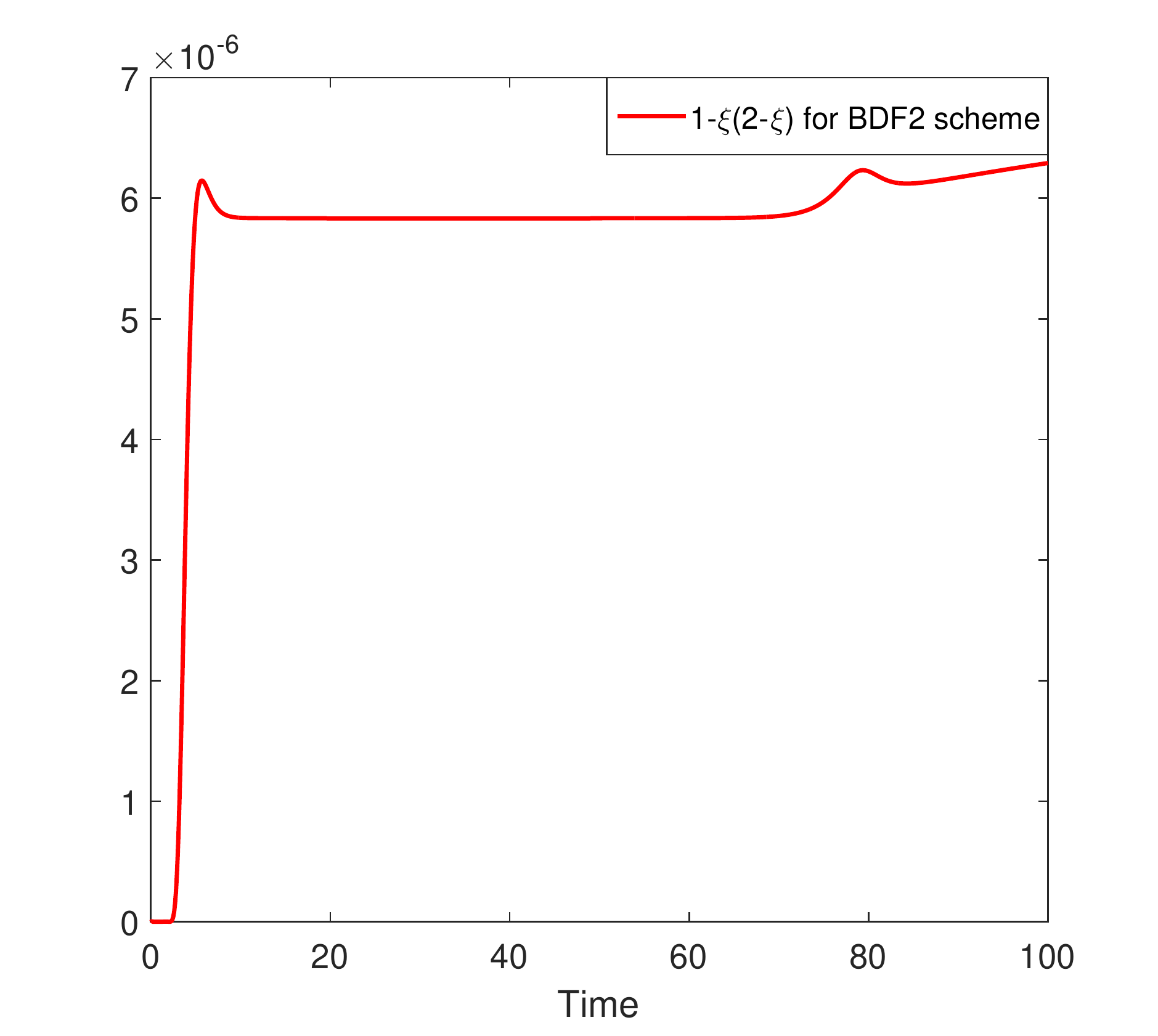}
\includegraphics[width=8cm,height=6cm]{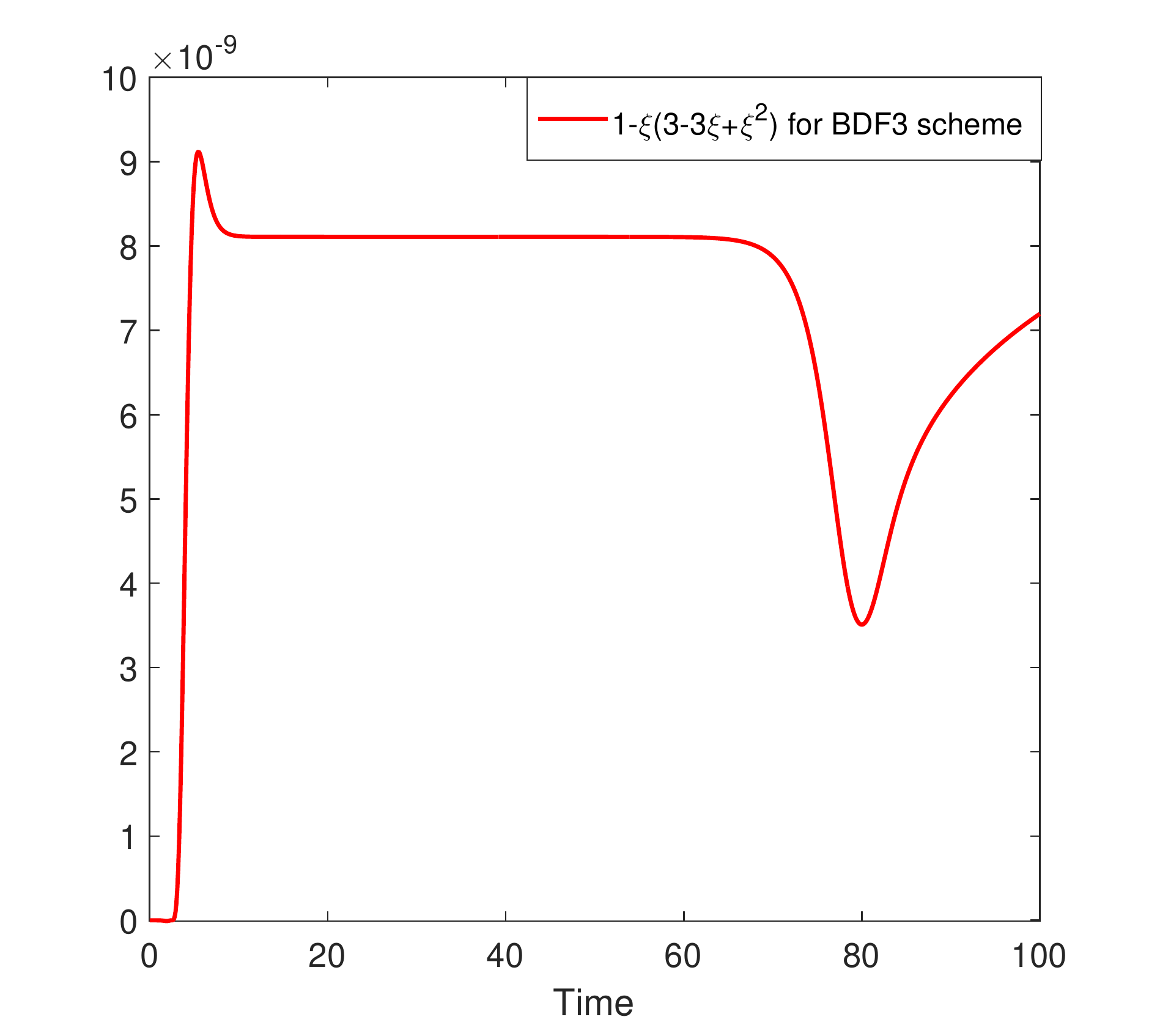}
\includegraphics[width=8cm,height=6cm]{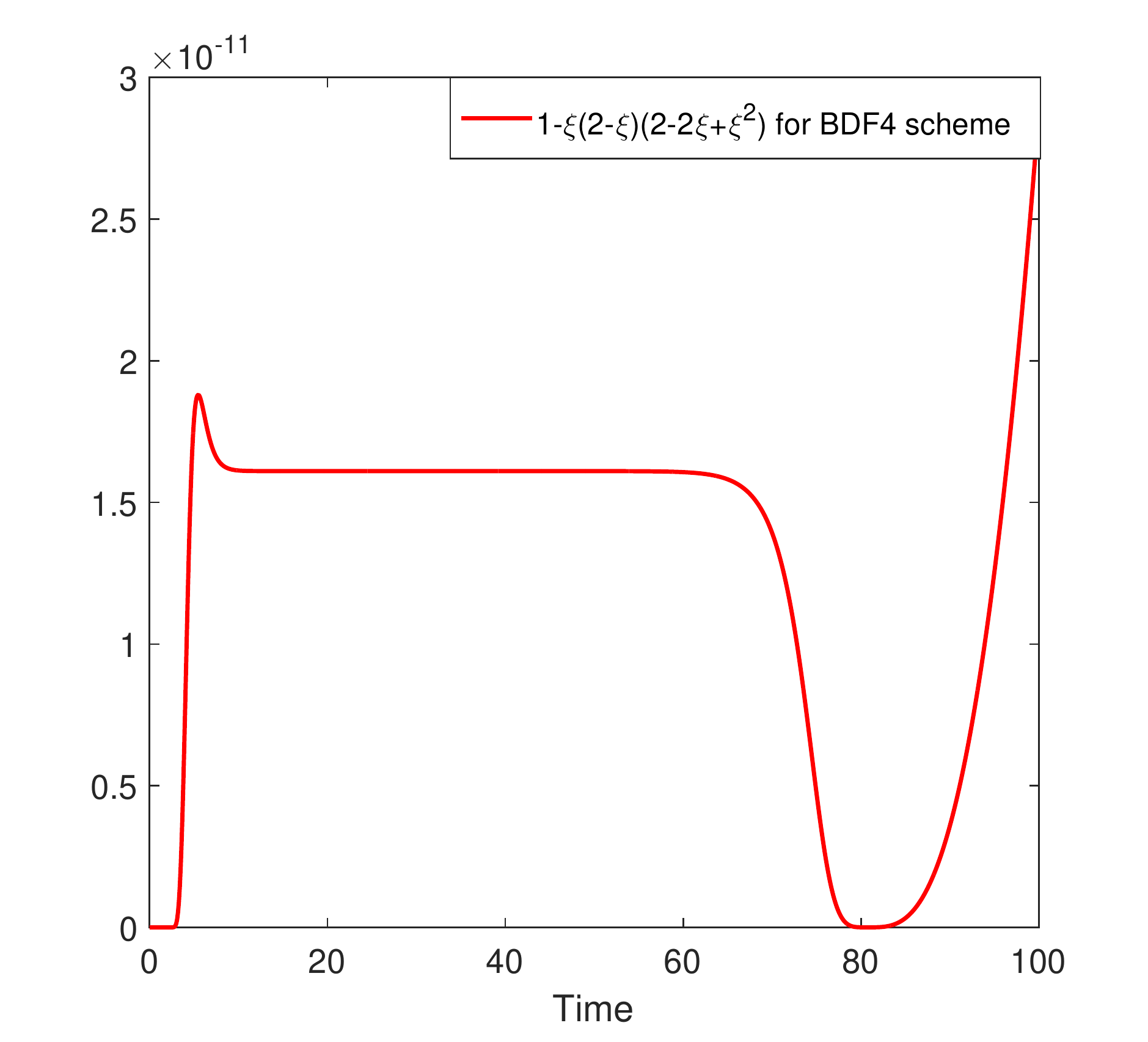}
\caption{Time evolution of $1-V(\xi)$ for the ESI-SAV CN, BDF2, BDF3 and BDF4 schemes with $\Delta t=0.1$.}\label{fig:fig1}
\end{figure}

\begin{figure}[htp]
\centering
\includegraphics[width=8cm,height=8cm]{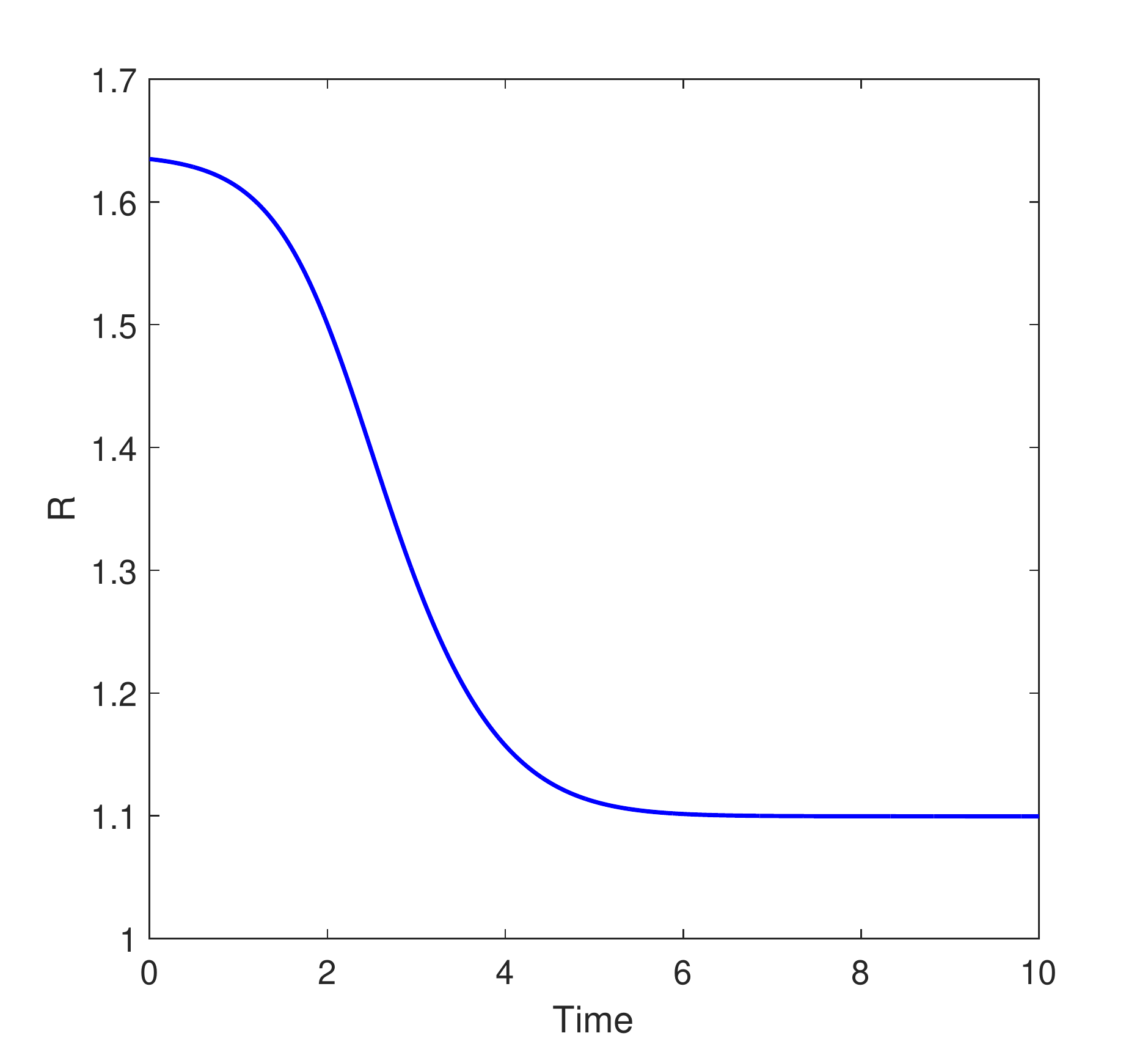}
\includegraphics[width=8cm,height=8cm]{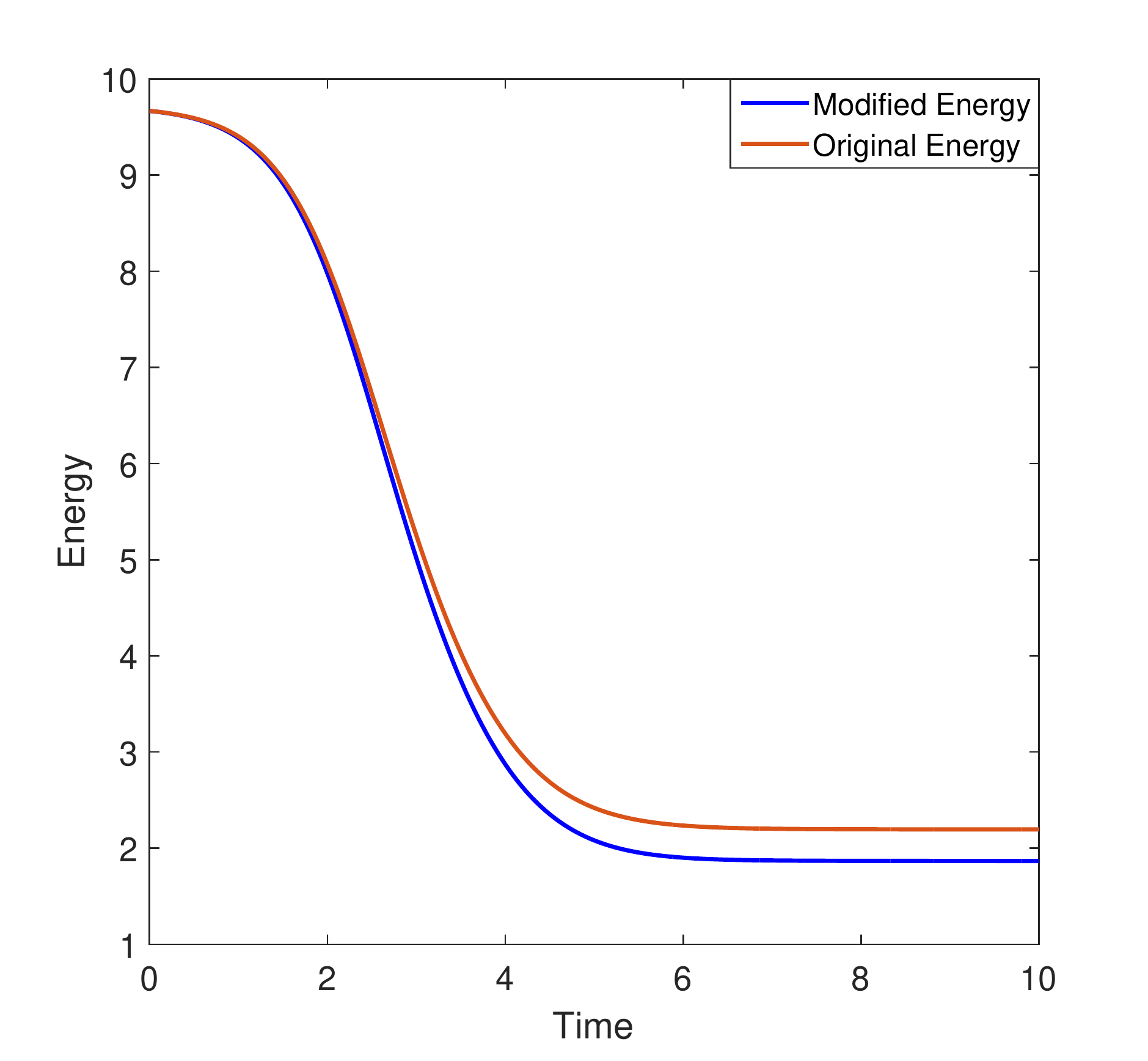}
\caption{Time evolution of both modified and original energy functional for $\Delta t=0.01$.}\label{fig:energy}
\end{figure}

\textbf{Example 2}: In the following, we solve a benchmark problem for the Cahn-Hilliard equation on $[0,2\pi)^2$ which can also be seen in many articles such as \cite{shen2018scalar}. When using the SAV type approach to simulate Cahn-Hilliard model, we need to specify the operators $\mathcal{L}=-\epsilon^2\Delta+\beta$ and $F(\phi)=\frac{1}{4}(\phi^2-1-\beta)^2$ to obtain stable simulation \cite{ShenA}. We take $\epsilon=0.025$, $\beta=2$ and discretize the space by the Fourier spectral method with $256\times256$ modes. The initial condition is chosen as the following
\begin{equation*}
\aligned
\phi_0(x,y,0)=0.25+0.4Rand(x,y),
\endaligned
\end{equation*}
where $Rand(x,y)$ is a randomly generated function.

Snapshots of the phase variable $\phi$ taken at $t=10$, $50$, $100$, $200$, $300$, $400$, $800$ and $1000$ with $\Delta t=0.1$ are shown in Figure \ref{fig:fig2}. The phase separation and coarsening process can be observed very simply which is consistent with the results in \cite{ShenA}. In Figure \ref{fig:fig3}, we plot the time evolution of the energy functional with five different time step size of $\Delta t=0.01$, $0.1$, $0.5$, $1$ and $5$ by using the first-order scheme based on the ESI-SAV approach. All energy curves show the monotonic decays for all time steps that confirms that the algorithm is unconditionally energy stable.
\begin{figure}[htp]
\centering
\subfigure[t=10]{
\includegraphics[width=3.8cm,height=3.8cm]{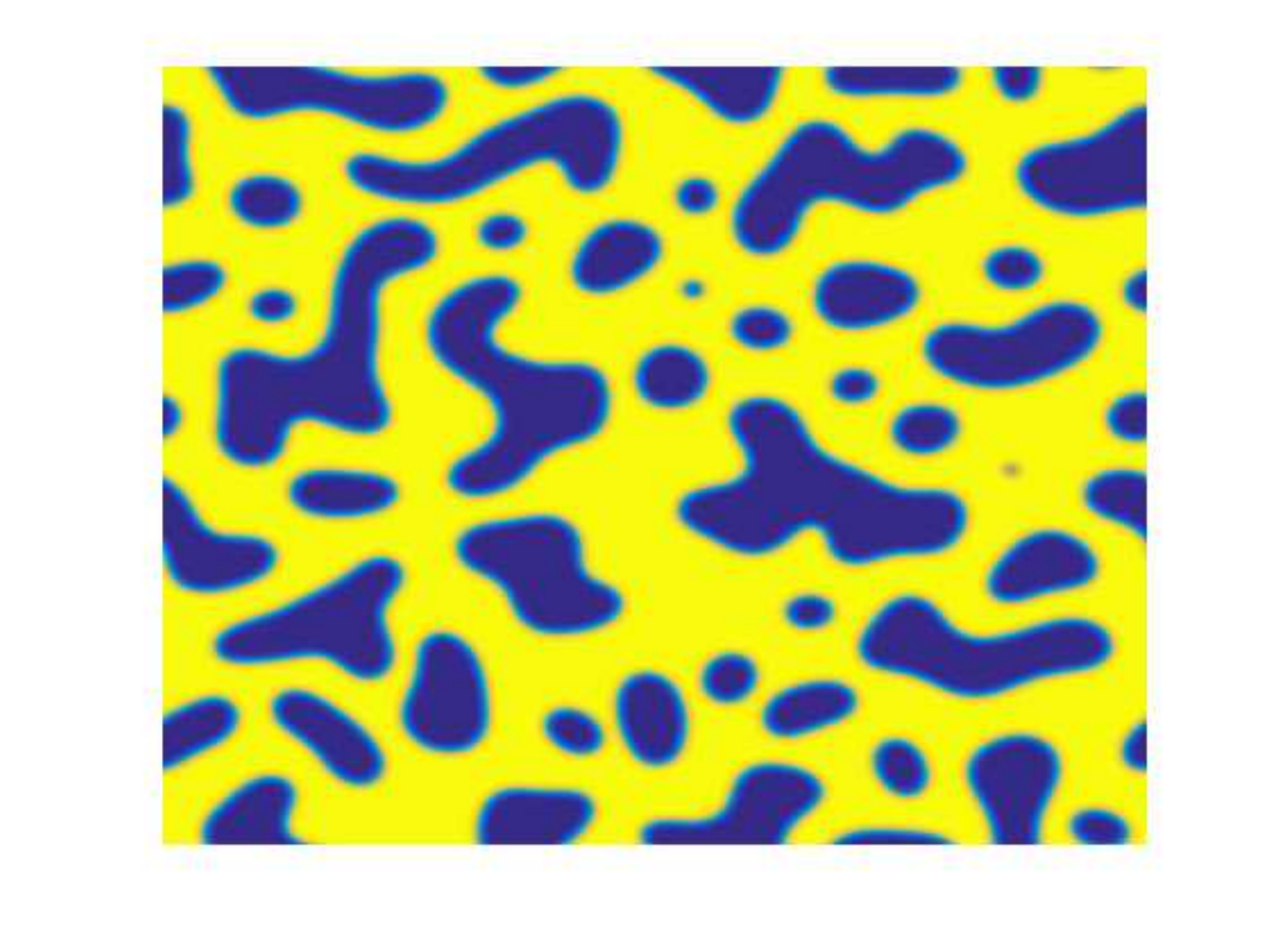}
\includegraphics[width=3.8cm,height=3.8cm]{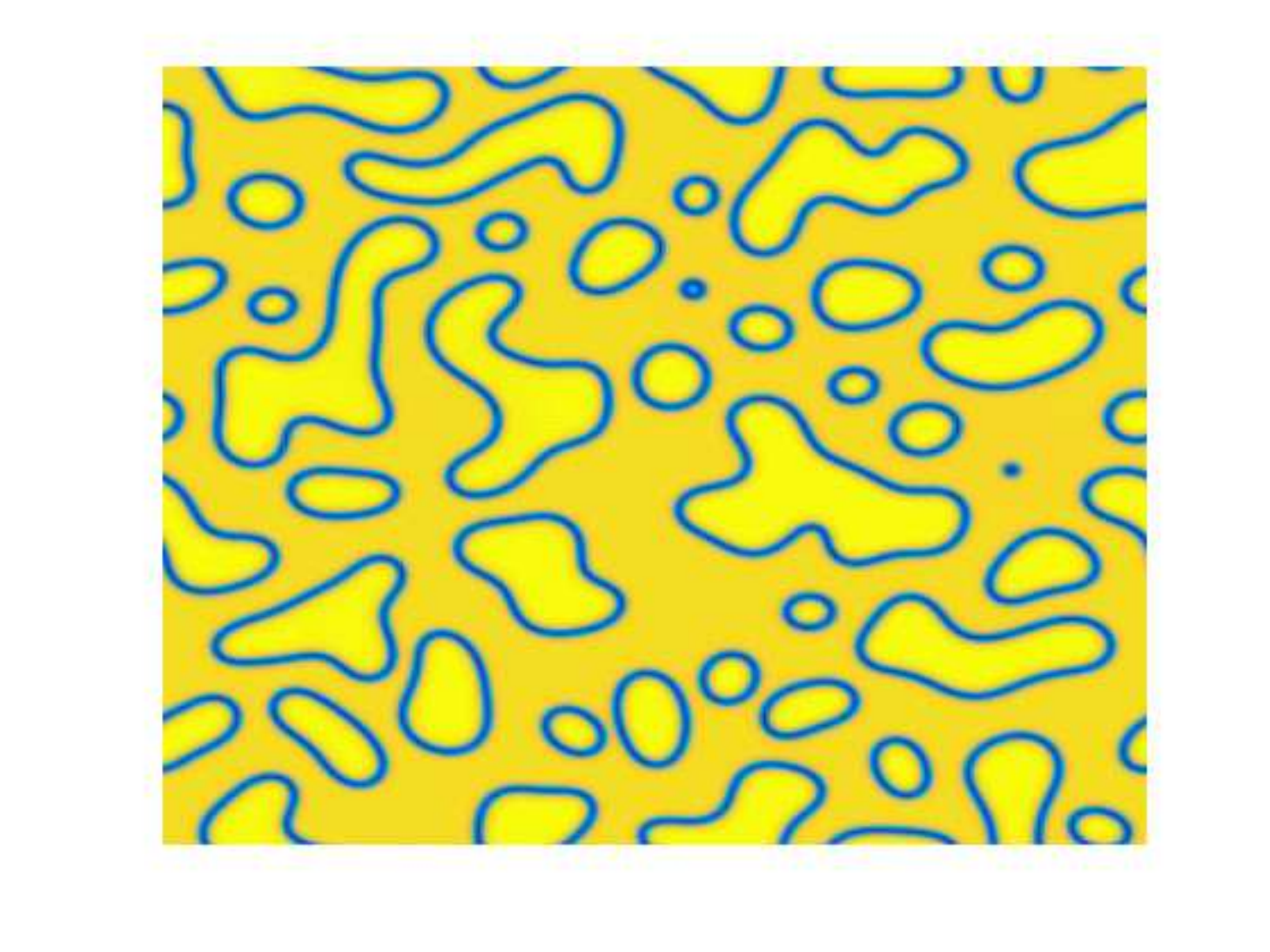}
}
\subfigure[t=50]
{
\includegraphics[width=3.8cm,height=3.8cm]{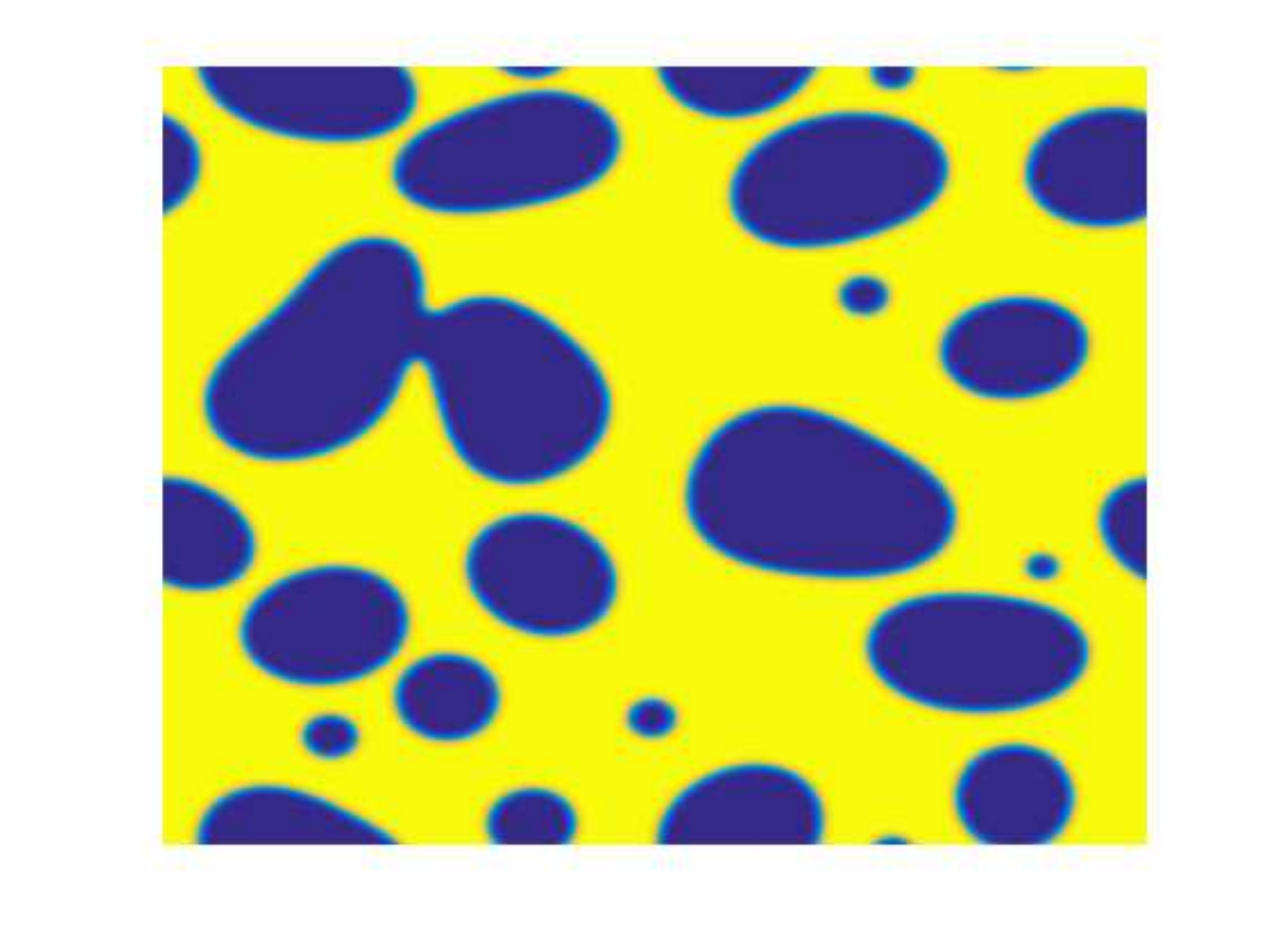}
\includegraphics[width=3.8cm,height=3.8cm]{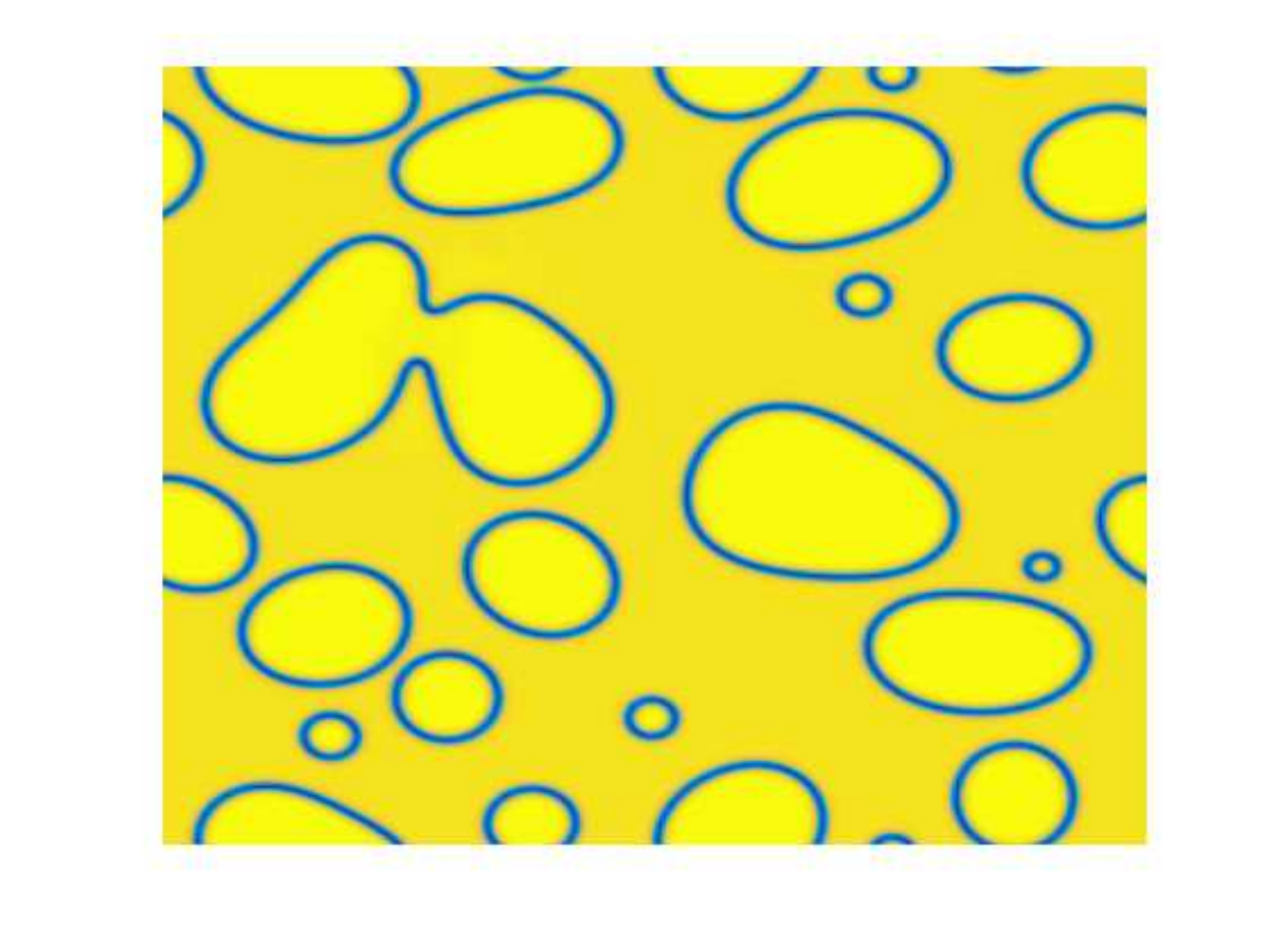}
}
\quad
\subfigure[t=100]
{
\includegraphics[width=3.8cm,height=3.8cm]{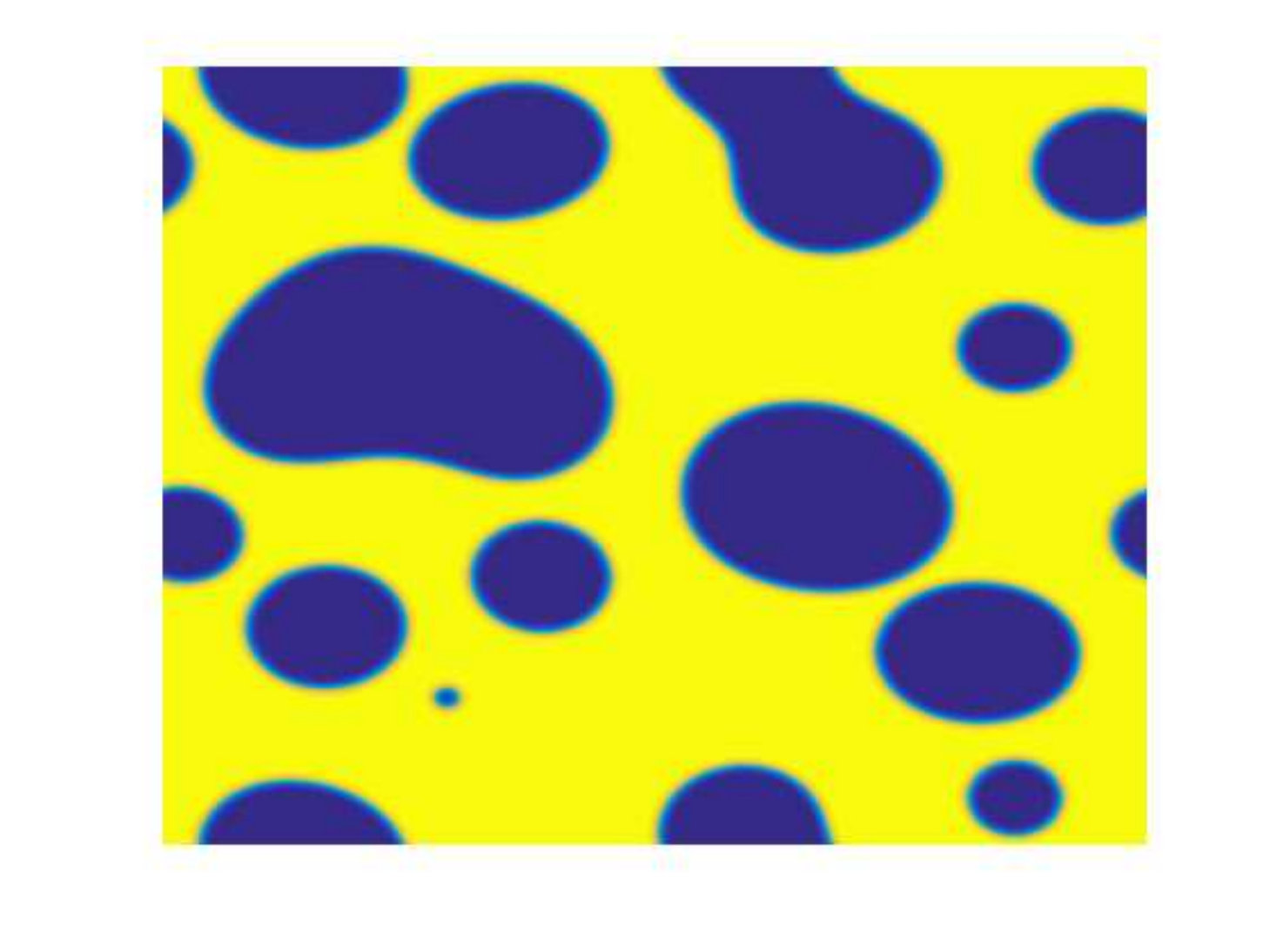}
\includegraphics[width=3.8cm,height=3.8cm]{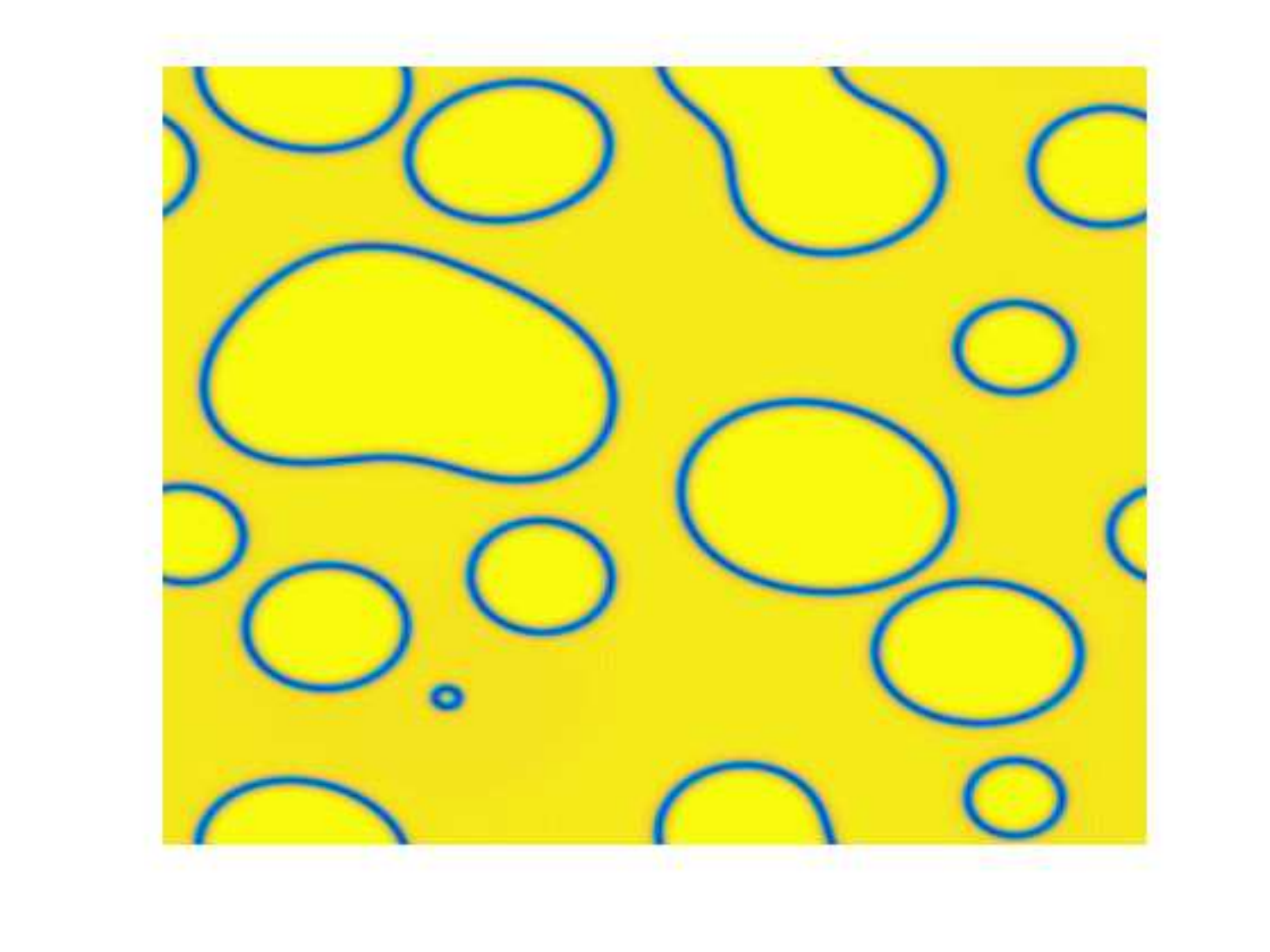}
}
\subfigure[t=200]
{
\includegraphics[width=3.8cm,height=3.8cm]{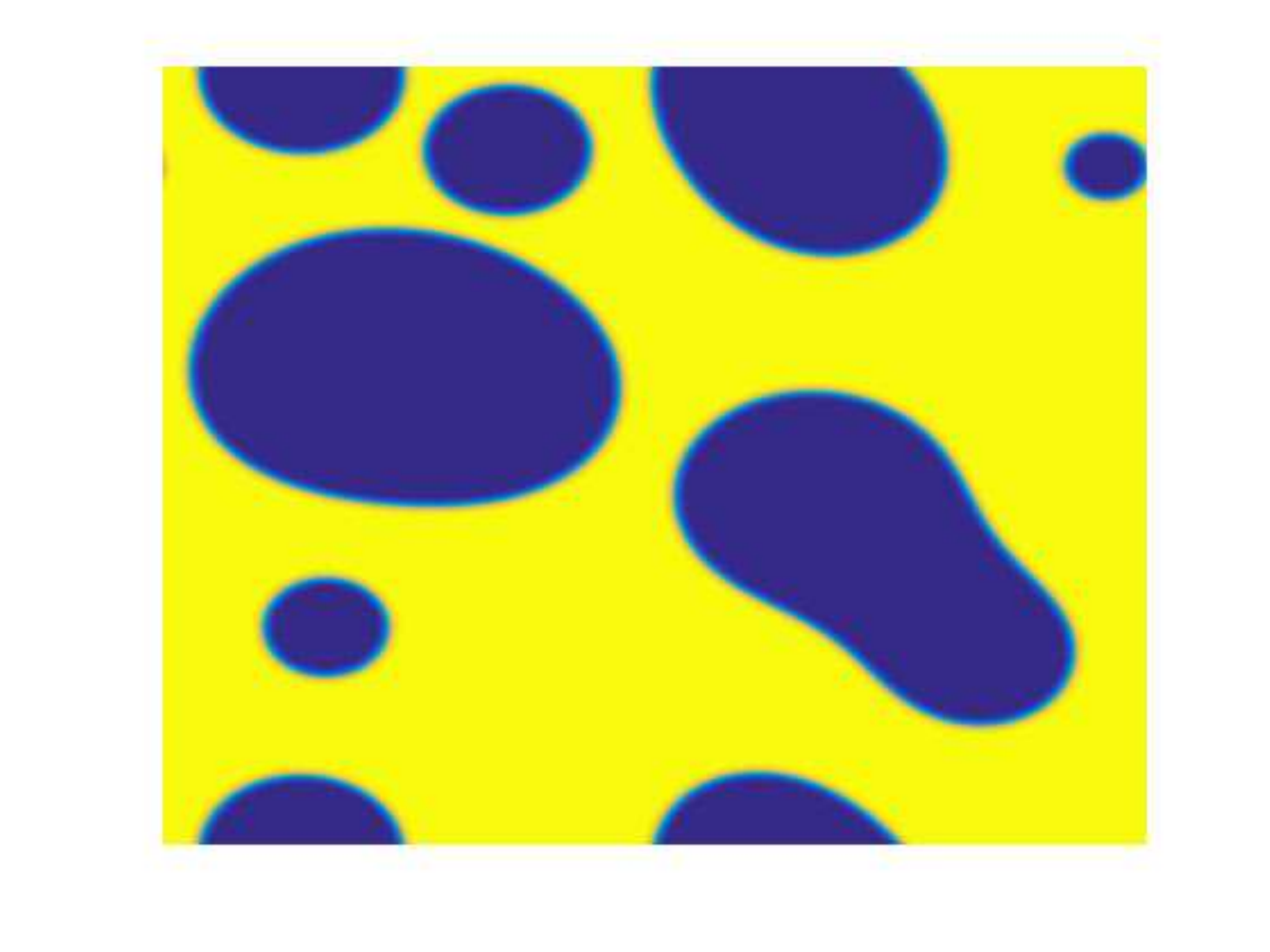}
\includegraphics[width=3.8cm,height=3.8cm]{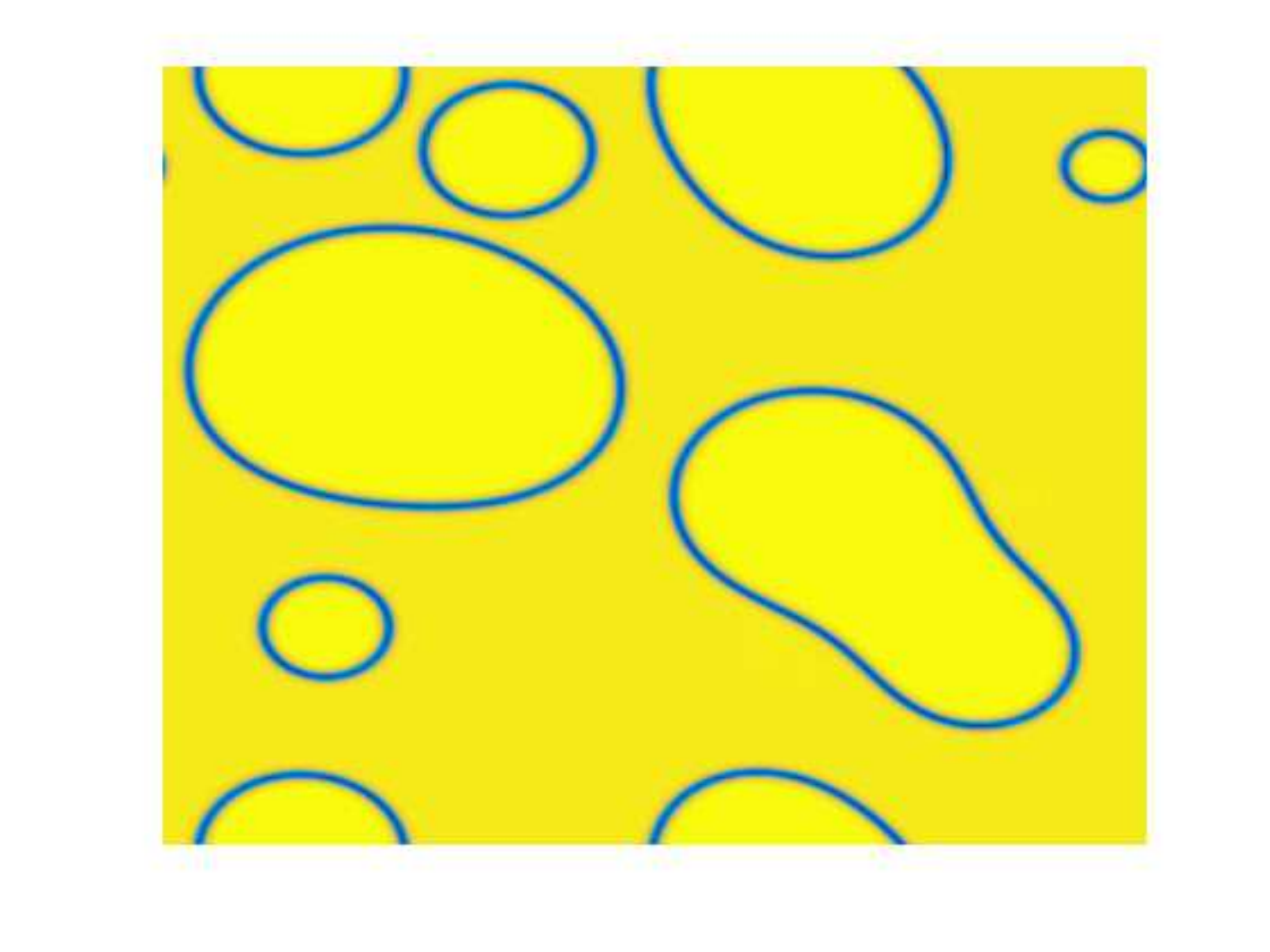}
}
\quad
\subfigure[t=400]
{
\includegraphics[width=3.8cm,height=3.8cm]{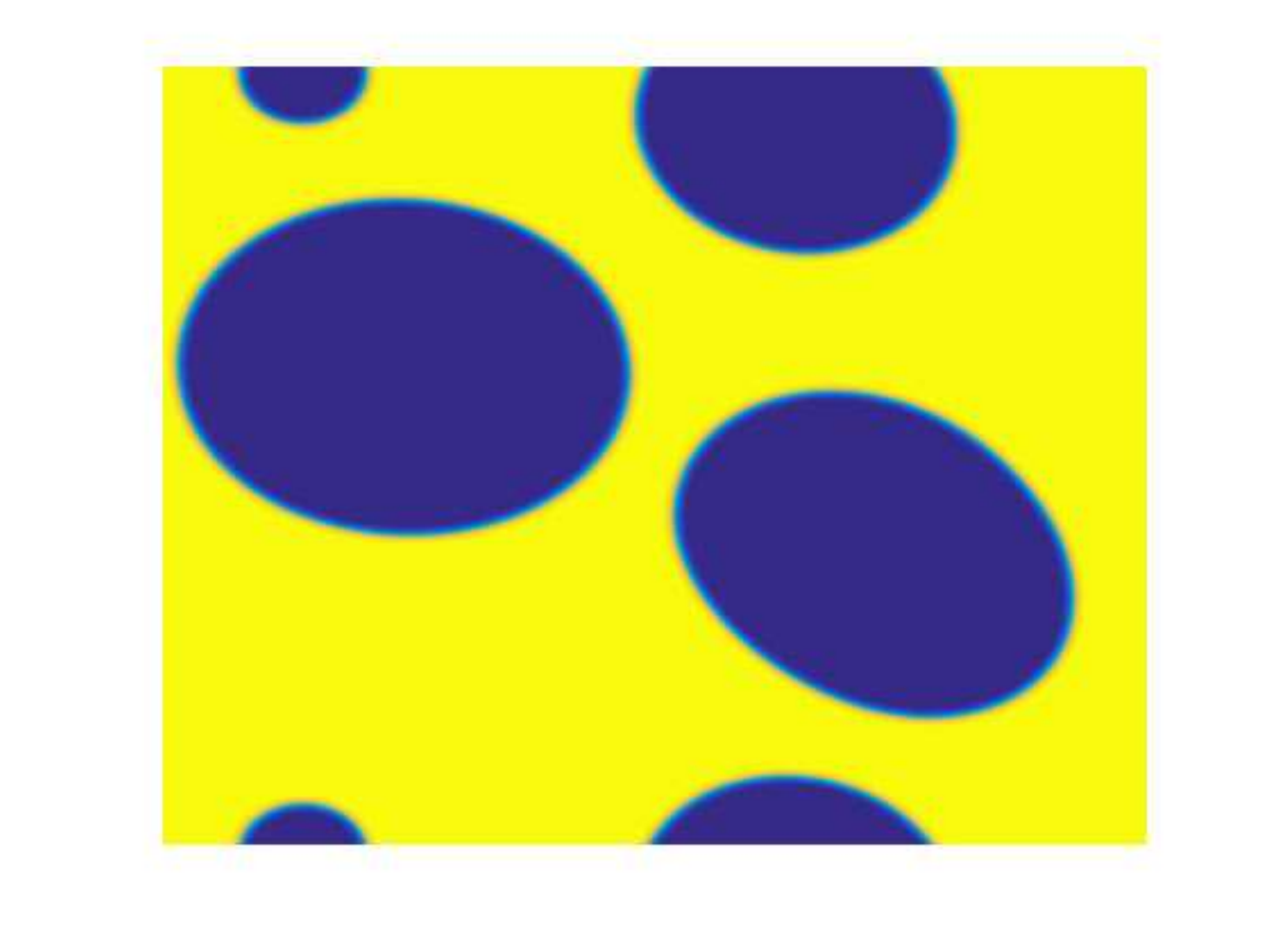}
\includegraphics[width=3.8cm,height=3.8cm]{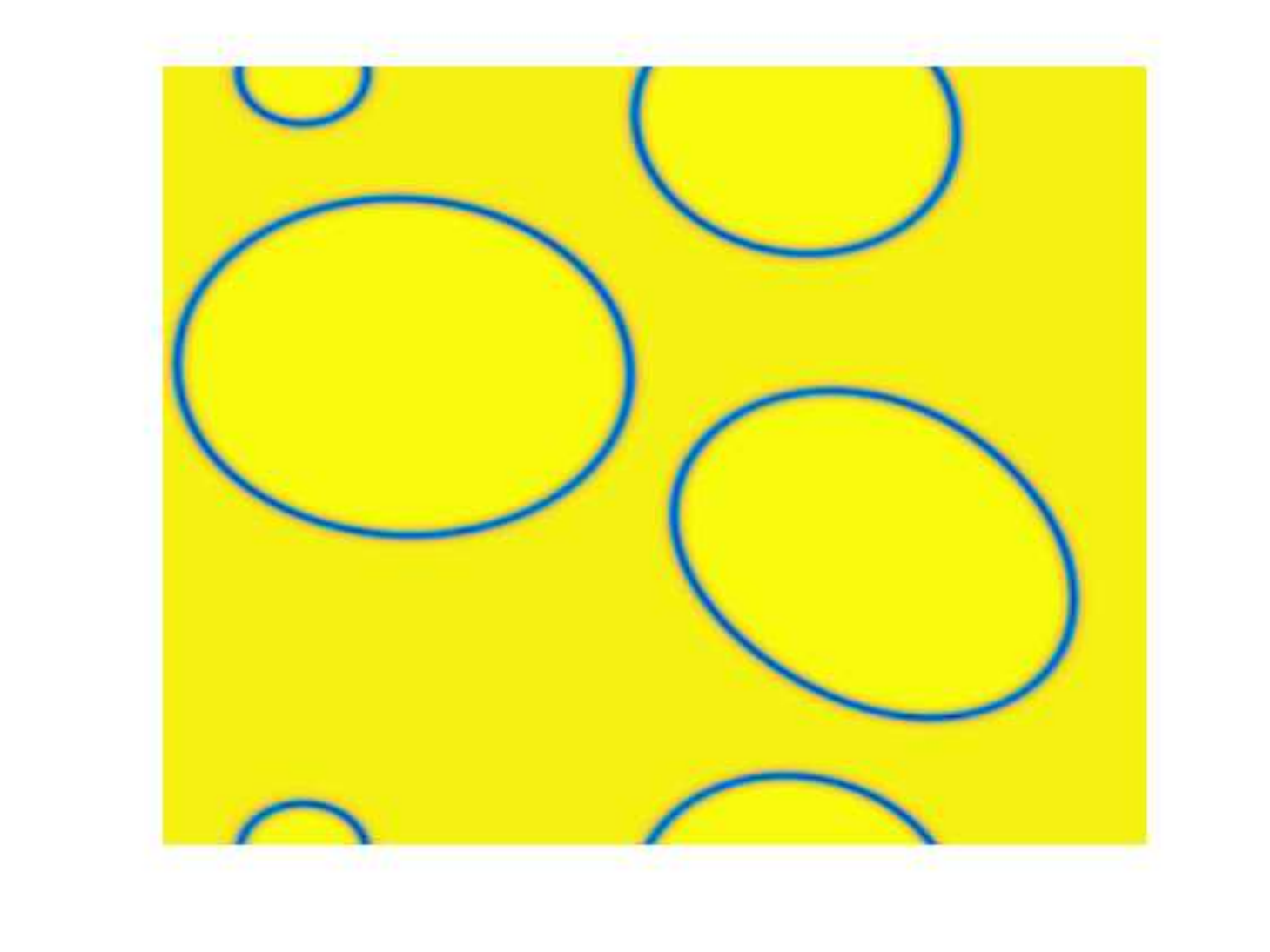}
}
\subfigure[t=1000]
{
\includegraphics[width=3.8cm,height=3.8cm]{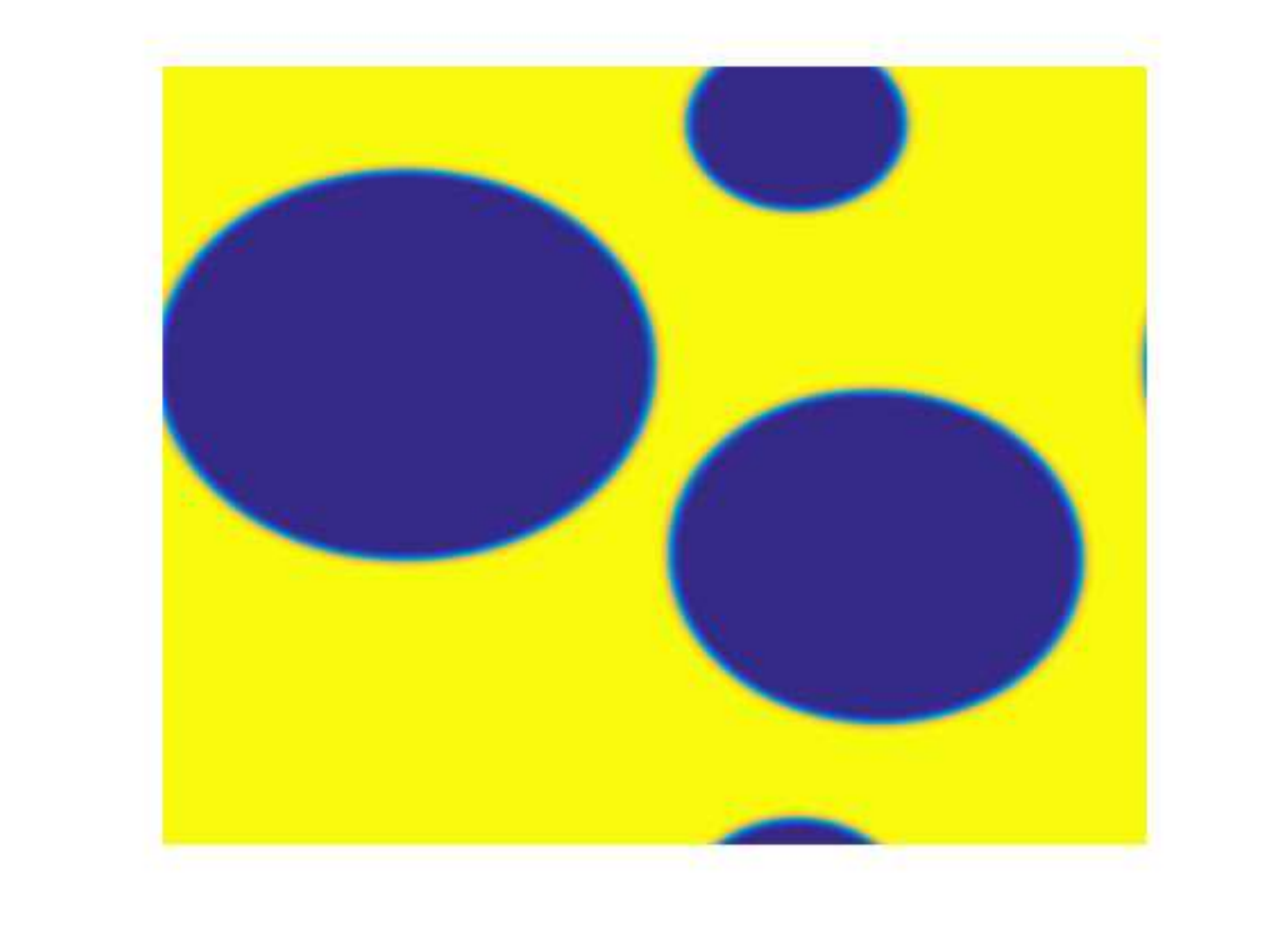}
\includegraphics[width=3.8cm,height=3.8cm]{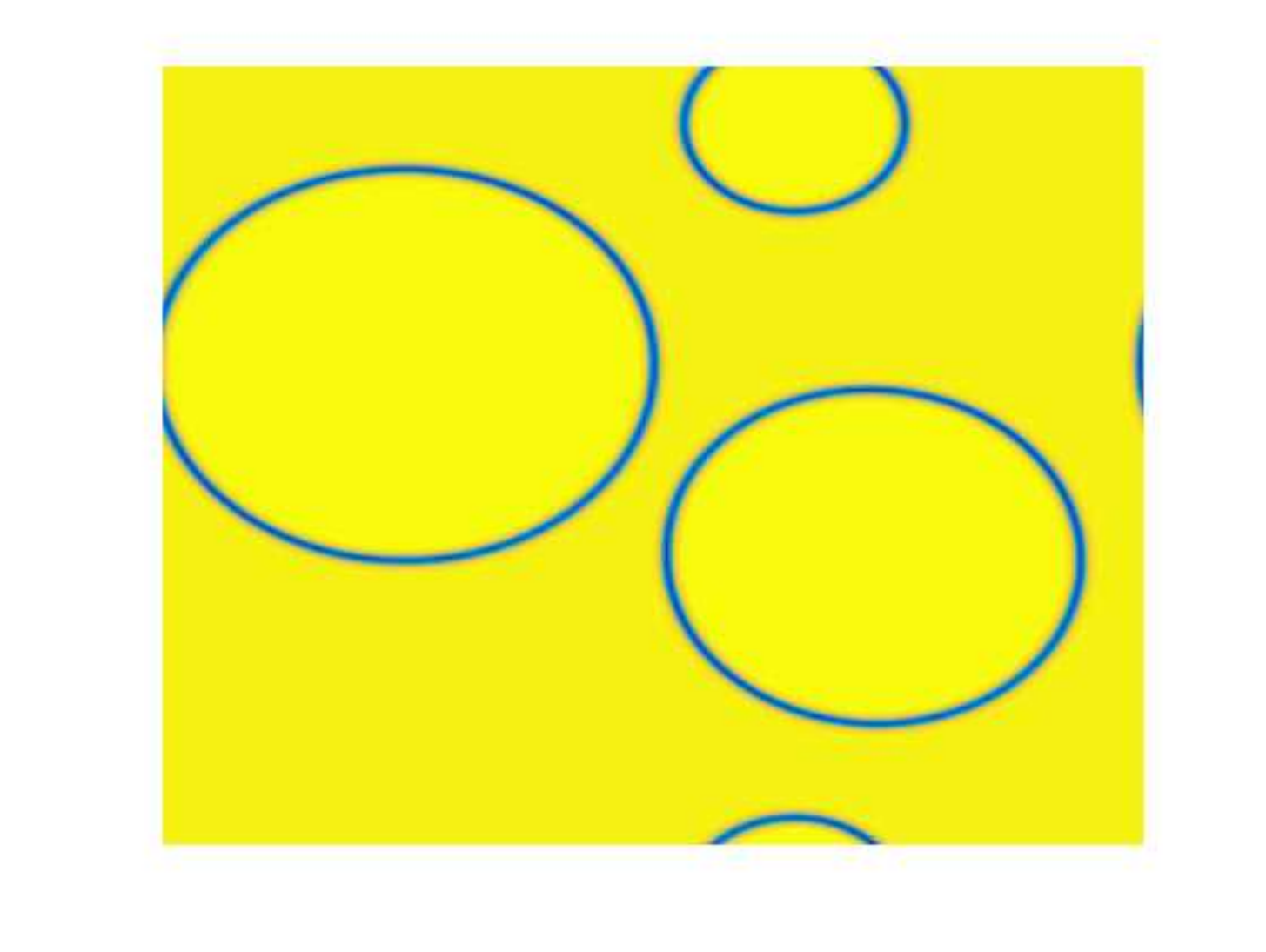}
}
\caption{Snapshots of the phase variable $\phi$ and phase interface are taken at t=10, 50, 100, 200, 400 and 1000 with $\Delta t=0.1$ for example 2.}\label{fig:fig2}
\end{figure}
\begin{figure}[htp]
\centering
\includegraphics[width=12cm,height=10cm]{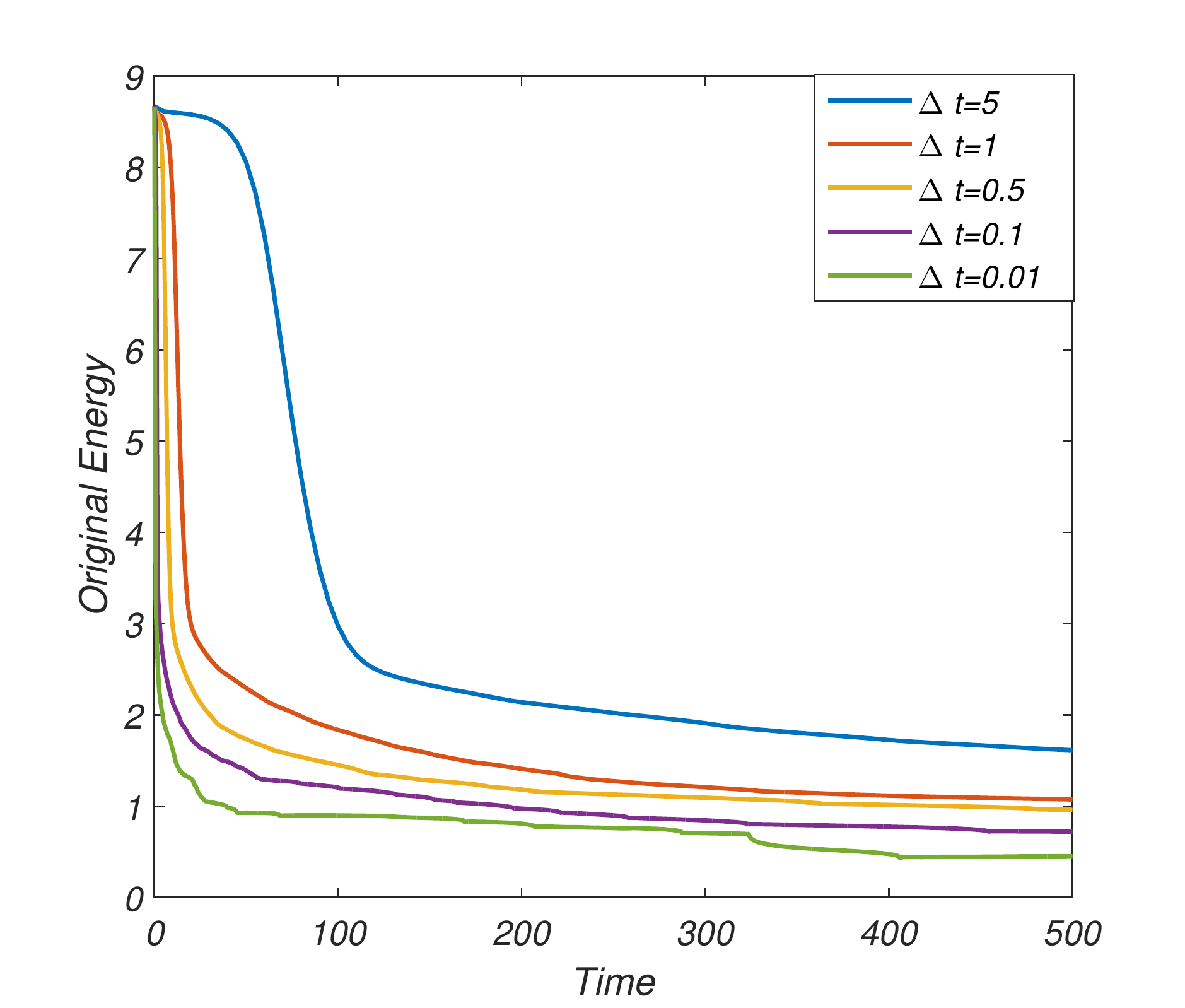}
\caption{Time evolution of the energy functional for five different time steps of $\Delta t=0.01$, $0.1$, $0.5$, $1$ and $5$.}\label{fig:fig3}
\end{figure}
\subsection{Phase field crystal equations}
In this subsection, we will simulate the phase transition behavior of the phase field crystal model for the proposed ES-SAV approach. The similar numerical example can be found in many articles such as \cite{li2017efficient,yang2017linearly}. Elder \cite{elder2002modeling} firstly proposed the phase field crystal (PFC) model based on density functional theory in 2002. This model can simulate the evolution of crystalline microstructure on atomistic length and diffusive time scales. It naturally incorporates elastic and plastic deformations and multiple crystal orientations, and can be applied to many different physical phenomena.

In particular, consider the following Swift-Hohenberg free energy:
\begin{equation*}
E(\phi)=\int_{\Omega}\left(\frac{1}{4}\phi^4+\frac{1}{2}\phi\left(-\epsilon+(1+\Delta)^2\right)\phi\right)d\textbf{x},
\end{equation*}
where $\textbf{x} \in \Omega \subseteq \mathbb{R}^d$, $\phi$ is the density field and $\epsilon$ is a positive bifurcation constant with physical significance. $\Delta$ is the Laplacian operator.

Considering a gradient flow in $H^{-1}$, one can obtain the phase field crystal equation under the constraint of mass conservation as follows:
\begin{equation*}
\frac{\partial \phi}{\partial t}=\Delta\mu=\Delta\left(\phi^3-\epsilon\phi+(1+\Delta)^2\phi\right), \quad(\textbf{x},t)\in\Omega\times Q,
\end{equation*}
which is a sixth-order nonlinear parabolic equation and can be applied to simulate various phenomena such as crystal growth, material hardness and phase transition. Here $Q=(0,T]$, $\mu=\frac{\delta E}{\delta \phi}$ is called the chemical potential.

\textbf{Example 3}: Consider the PFC model in the computational domain $\Omega=[0,128]^2$ with the following initial condition
\begin{equation*}
\aligned
&\phi_0(x,y)=0.07+0.07Rand(x,y),
\endaligned
\end{equation*}
where the $Rand(x,y)$ is the random number in $[-1,1]$ with zero mean. The order parameter is $\epsilon=0.025$. we set $256^2$ Fourier modes to discretize the two dimensional space.

We show the phase transition behavior of the density field at various times in Figure \ref{fig:fig4} with both SAV and ESI-SAV methods. Configuration evolutions for PFC model by SAV and ESI-SAV schemes are taken at $t=40$, $400$, $1000$, and $3000$. No visible difference is observed which means the new proposed method is effective.

\begin{figure}[htp]
\centering
\begin{tabular}{ccccc}
SAV&\includegraphics[width=3.6cm,height=3.6cm]{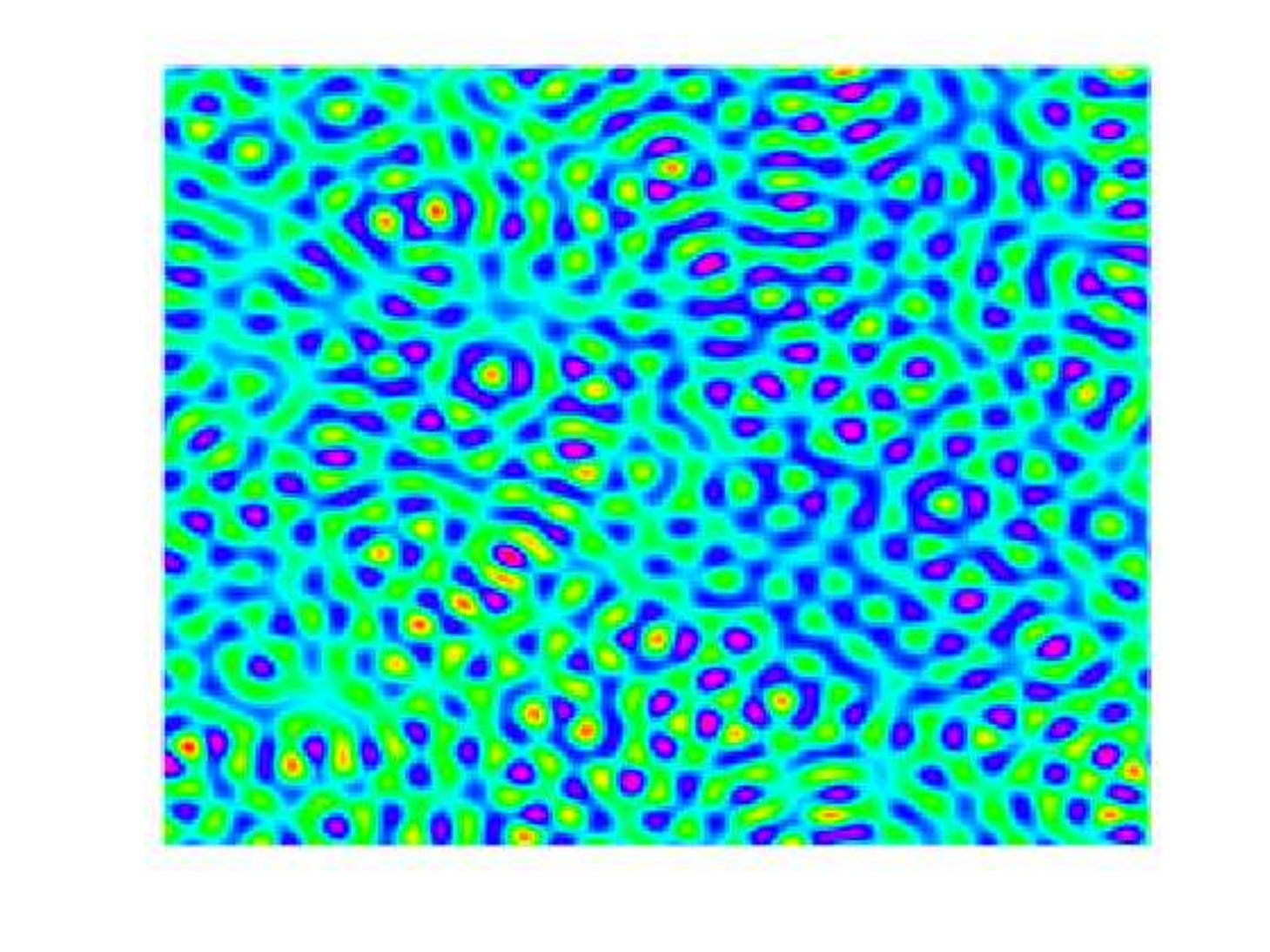}
\includegraphics[width=3.6cm,height=3.6cm]{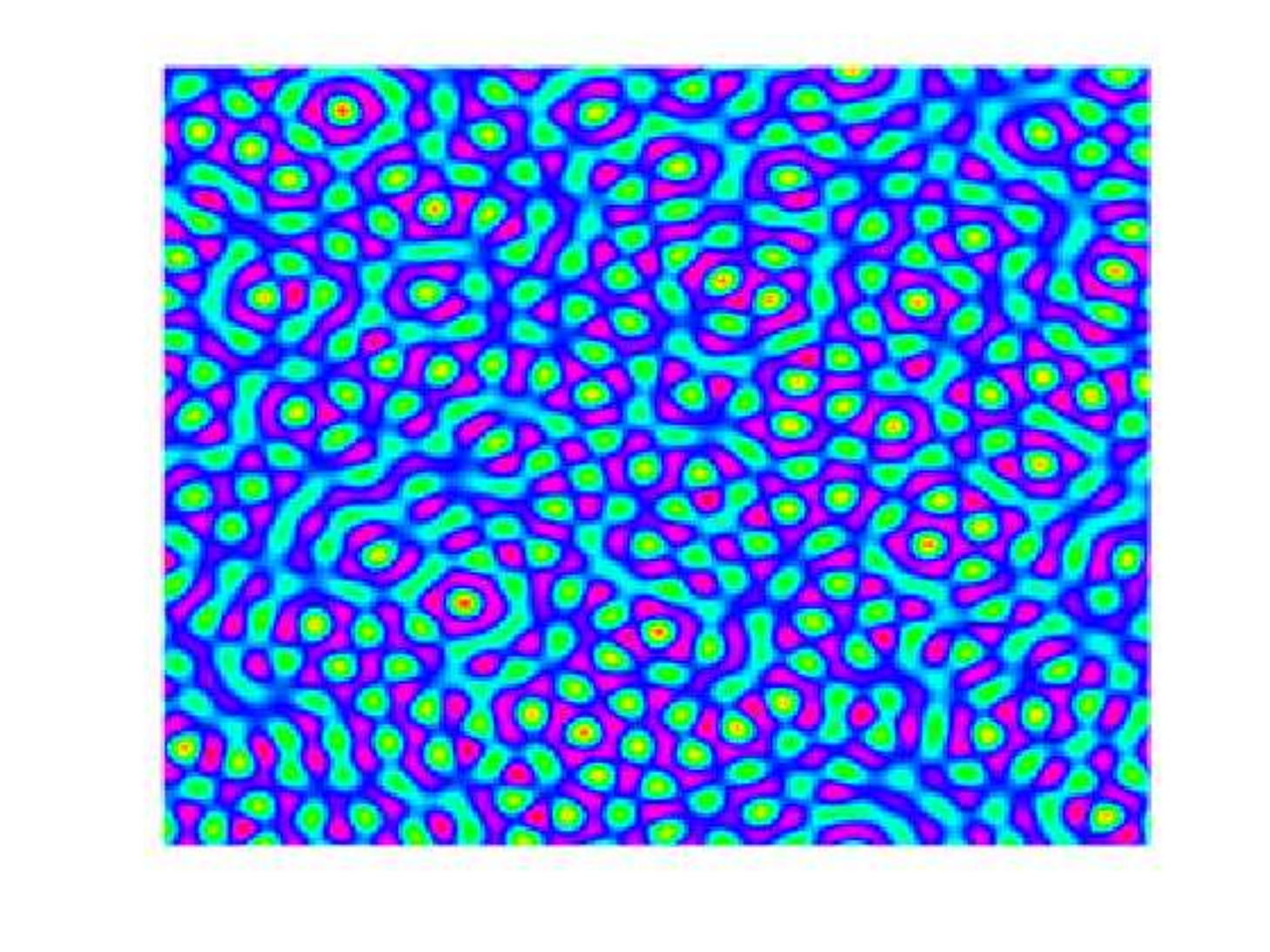}
\includegraphics[width=3.6cm,height=3.6cm]{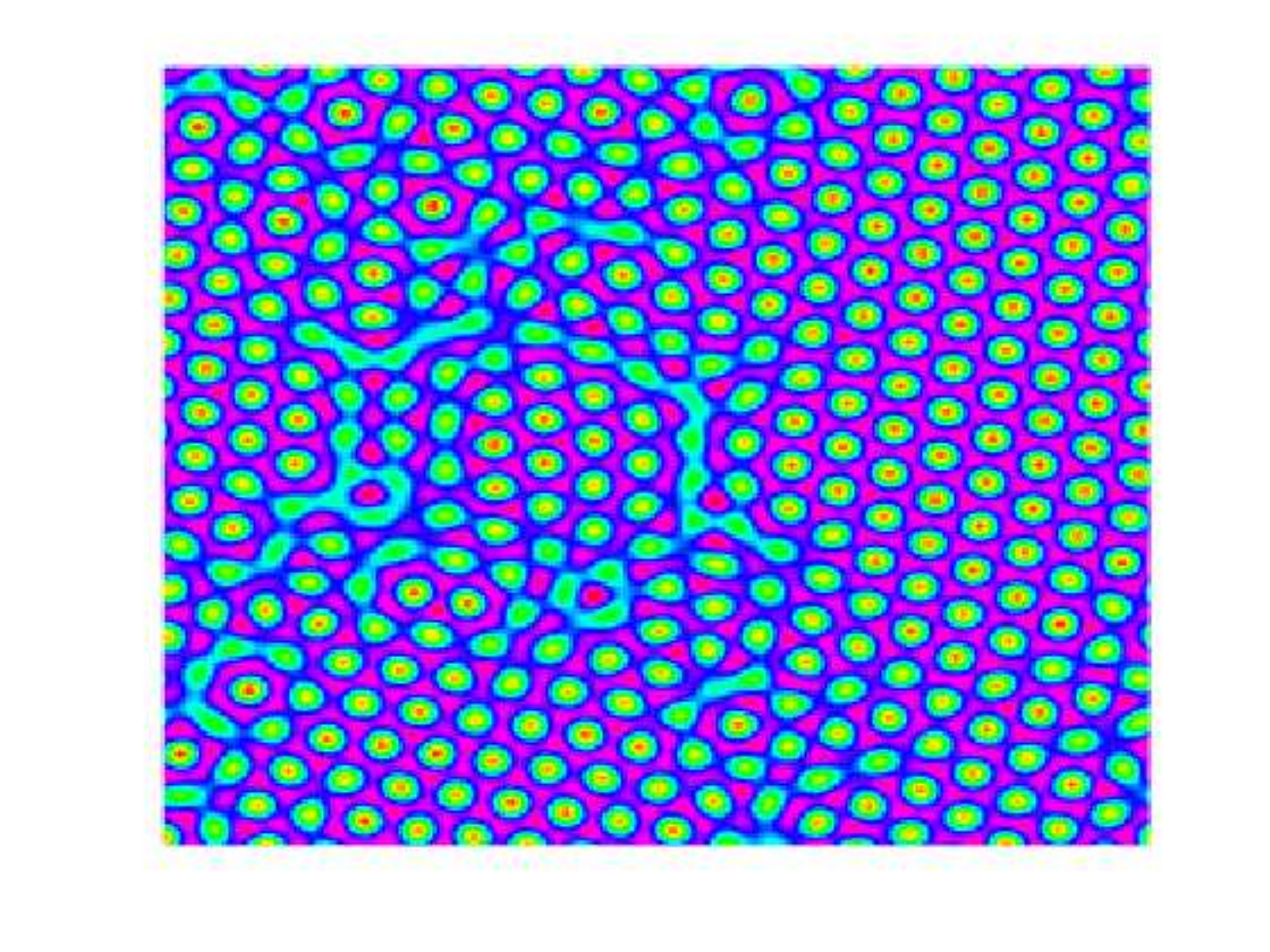}
\includegraphics[width=3.6cm,height=3.6cm]{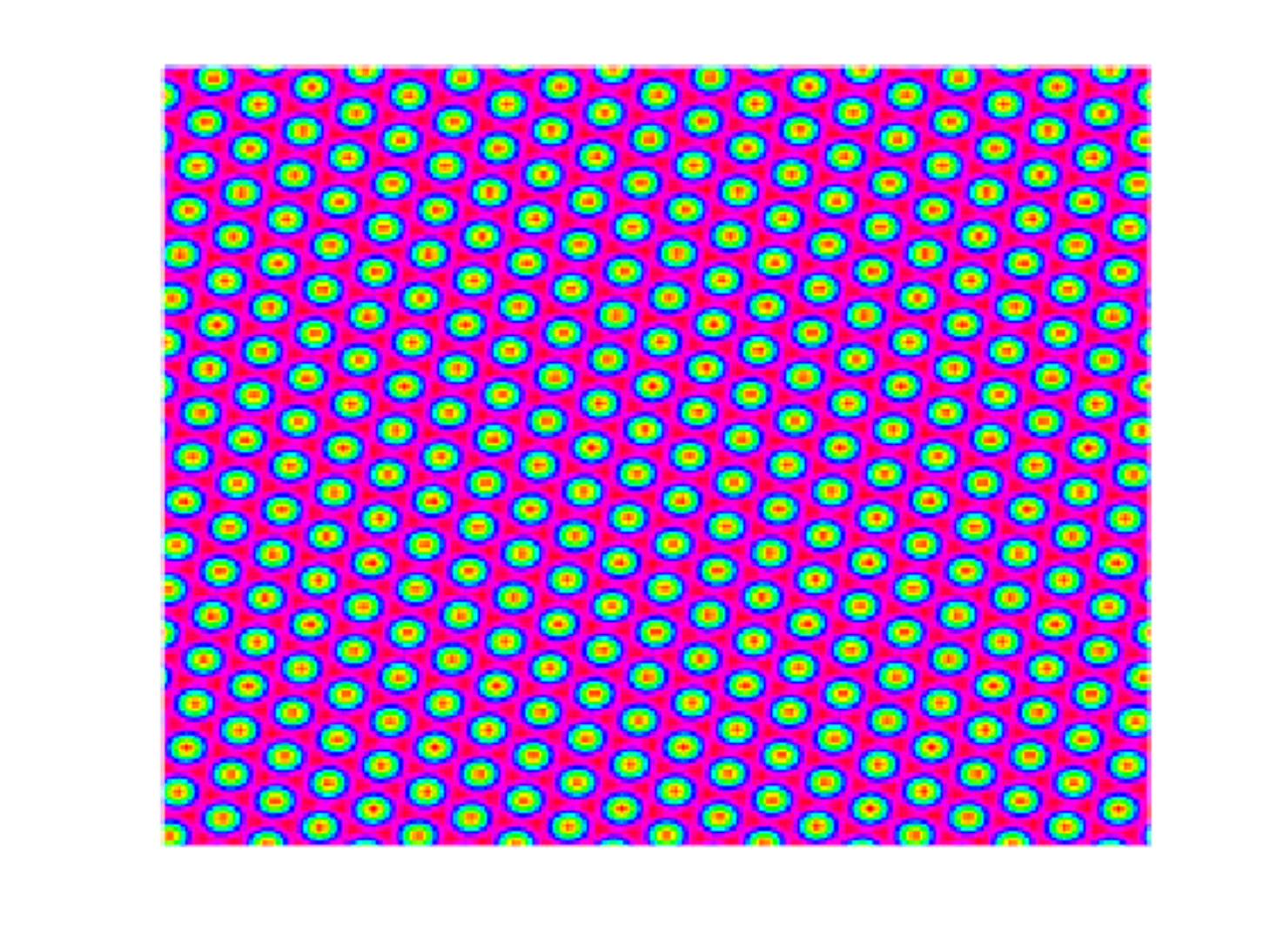}\\
\Xhline{1.2pt}
ESI-SAV&\includegraphics[width=3.6cm,height=3.6cm]{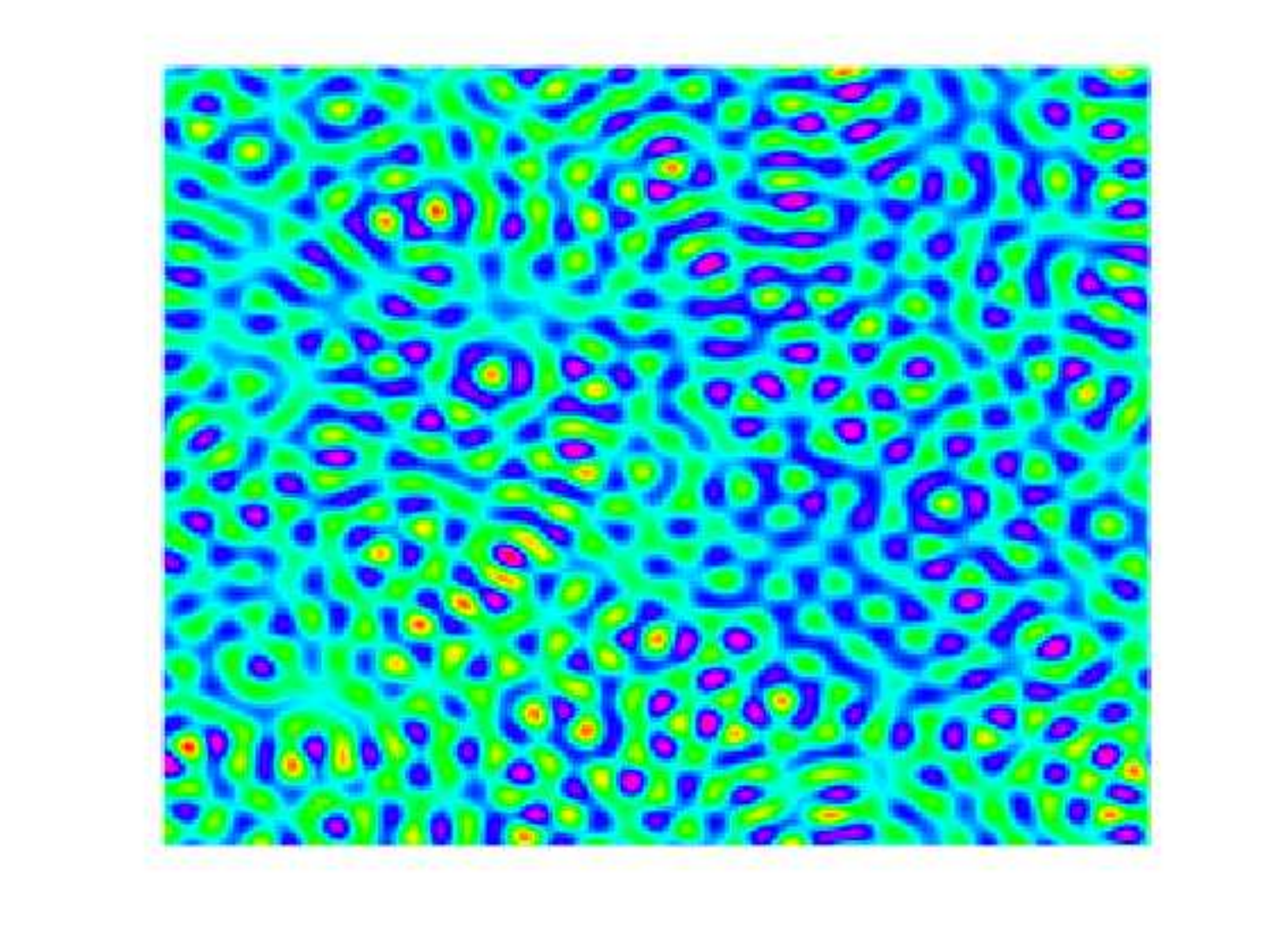}
\includegraphics[width=3.6cm,height=3.6cm]{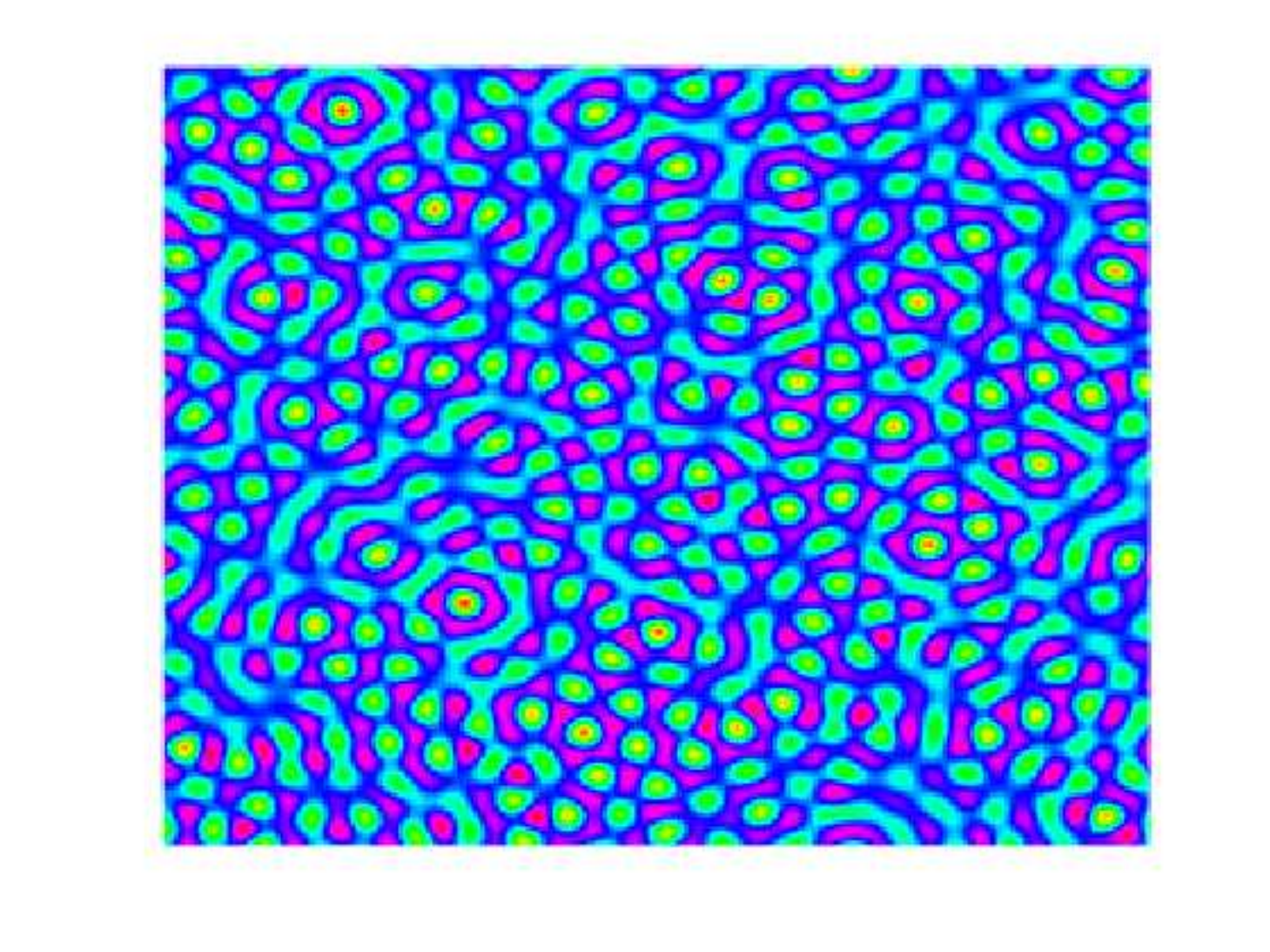}
\includegraphics[width=3.6cm,height=3.6cm]{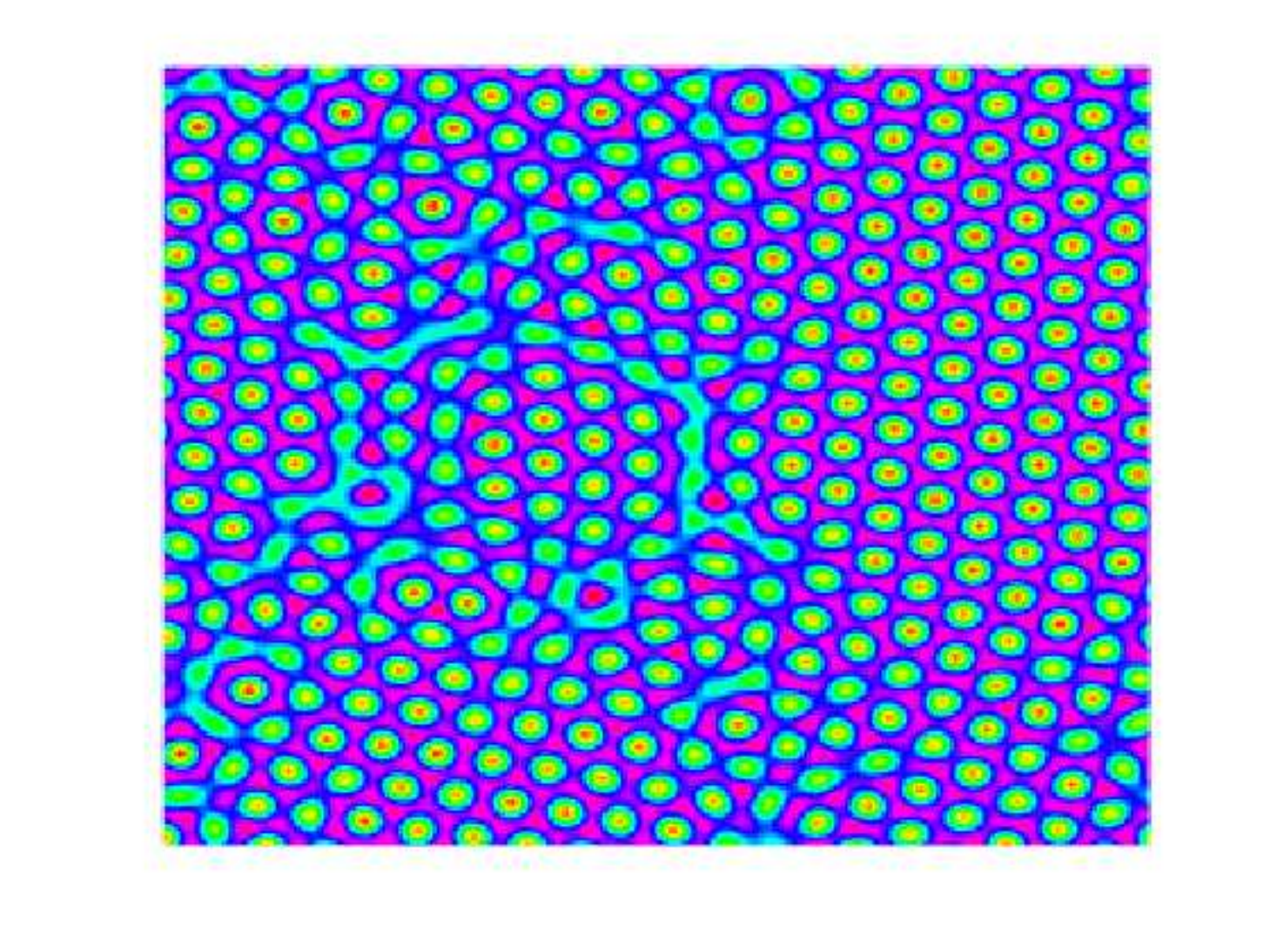}
\includegraphics[width=3.6cm,height=3.6cm]{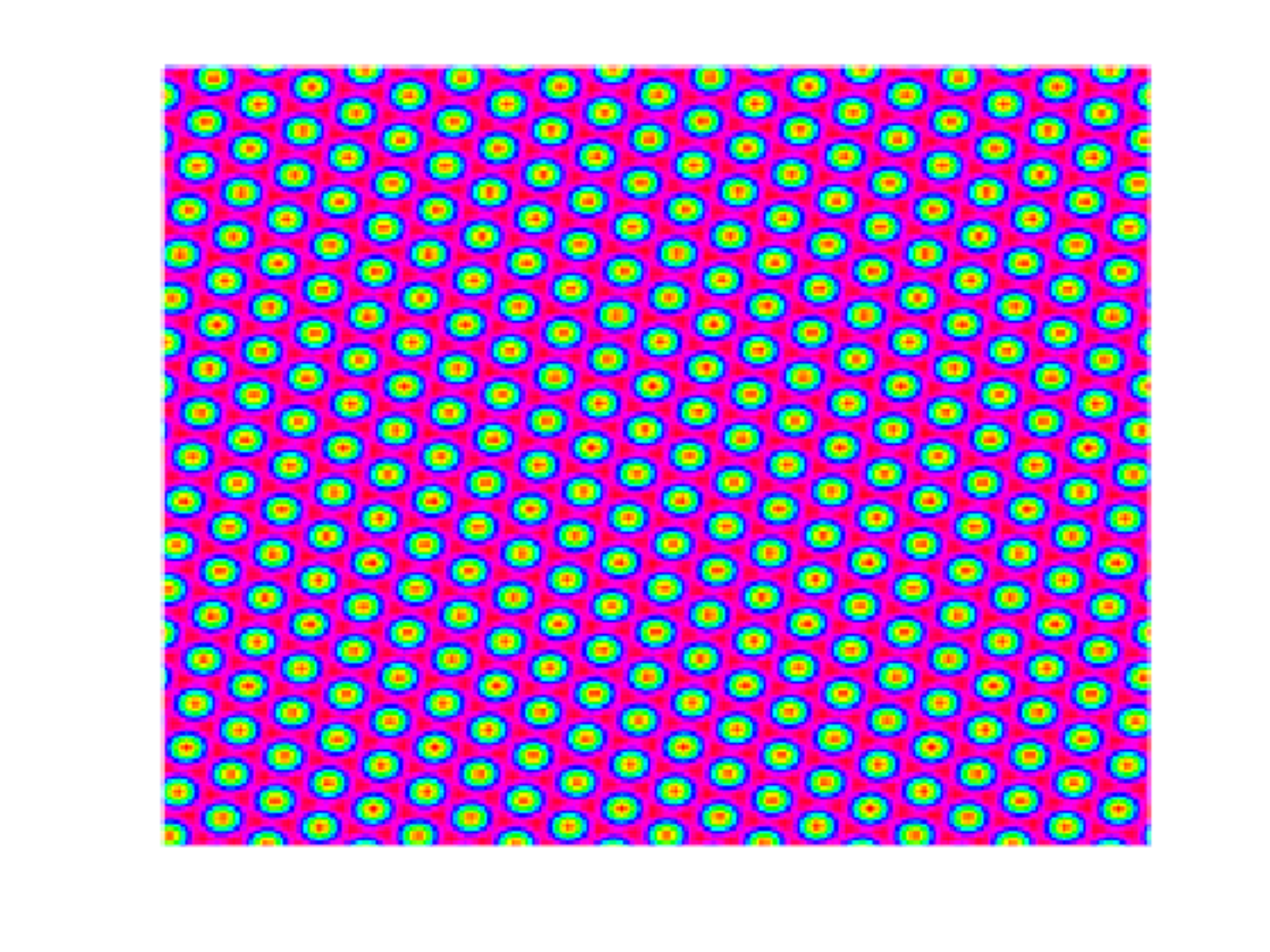}\\
\Xhline{1.2pt}
\end{tabular}
\caption{Configuration evolutions for PFC model by SAV and ESI-SAV schemes are taken at $t=40$, $400$, $1000$, and $3000$.}\label{fig:fig4}
\end{figure}

\textbf{Example 4}: The process of crystallization in a supercool liquid is very classical example.  So in the following, we take $\epsilon=0.25$ to start our simulation on a domain $[-200,200]\times[-200,200]$. We generated the three crystallites using random perturbations on three small square pathes. The following expression will be used to define the crystallites such as in \cite{yang2017linearly}:
\begin{equation*}
\phi(x_l,y_l)=\overline{\phi}+C\left(\cos(\frac{q}{\sqrt{3}}y_l)\cos(qx_l)-\frac12\cos(\frac{2q}{\sqrt{3}}y_l)\right),
\end{equation*}
where $x_l$, $y_l$ define a local system of cartesian coordinates that is oriented with the
crystallite lattice. The parameters $\overline{\phi}=0.285$, $C=0.446$ and $q=0.66$. The local cartesian system is defined as
\begin{equation*}
\aligned
&x_l(x,y)=xsin\theta+ycos\theta,\\
&y_l(x,y)=-xcos\theta+ysin\theta.
\endaligned
\end{equation*}
we set $512^2$ Fourier modes to discretize the two dimensional space. The centers of three pathes are located at $(150,150)$, $(250,300)$ and $(300,200)$ with $\theta=\pi/4$, $0$ and $-\pi/4$. The length of each square is 40. Figure \ref{fig:fig5} shows the snapshots of the density field $\phi$ at different times. We observe the growth of the crystalline phase. Three different crystal grains grow and become large enough to form grain boundaries finally. We plot the energy dissipative curve in Figure \ref{fig:fig6} using three time steps of $\Delta t=0.01$, $0.1$ and $1$. One can observe that the original energies decrease at all time steps.
\begin{figure}[htp]
\centering
\subfigure[t=0]{
\includegraphics[width=4cm,height=4cm]{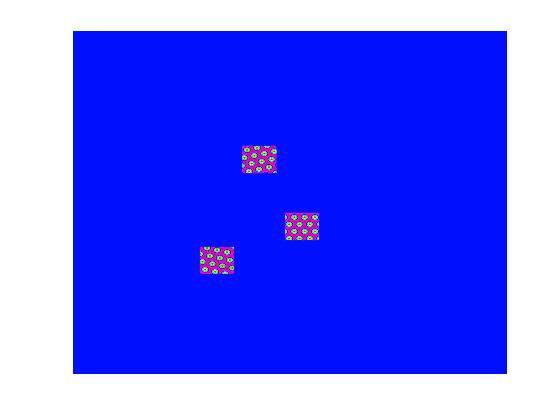}
}
\subfigure[t=200]
{
\includegraphics[width=4cm,height=4cm]{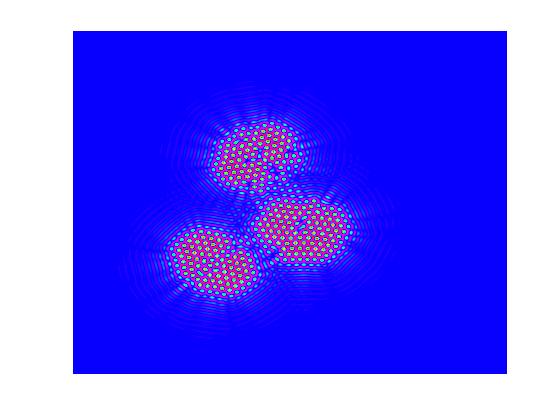}
}
\subfigure[t=250]
{
\includegraphics[width=4cm,height=4cm]{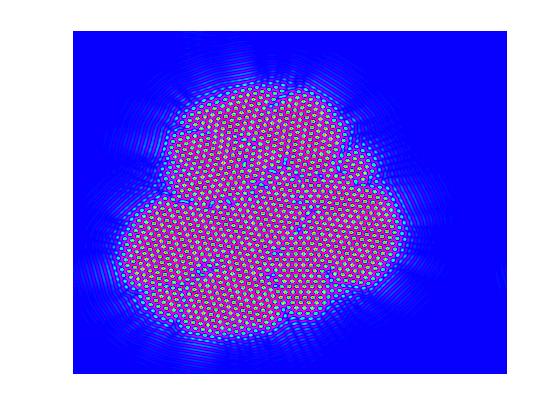}
}
\quad
\subfigure[t=350]{
\includegraphics[width=4cm,height=4cm]{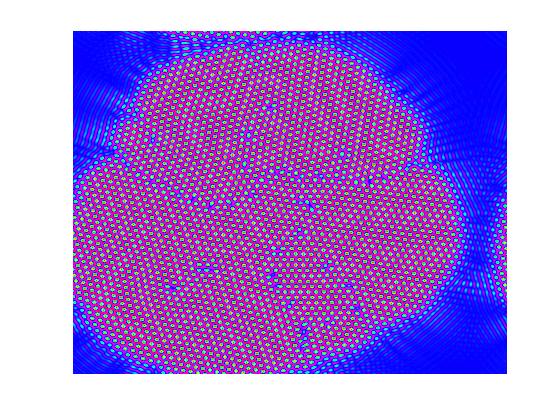}
}
\subfigure[t=400]
{
\includegraphics[width=4cm,height=4cm]{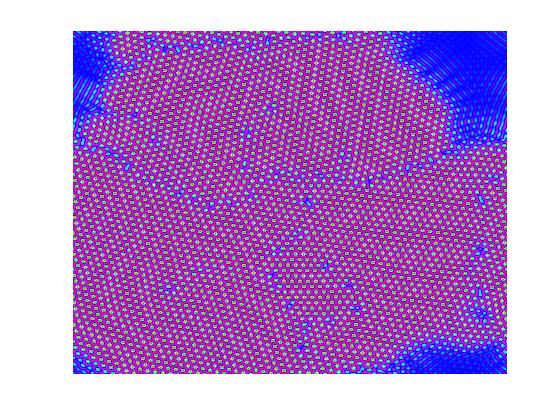}
}
\subfigure[t=500]
{
\includegraphics[width=4cm,height=4cm]{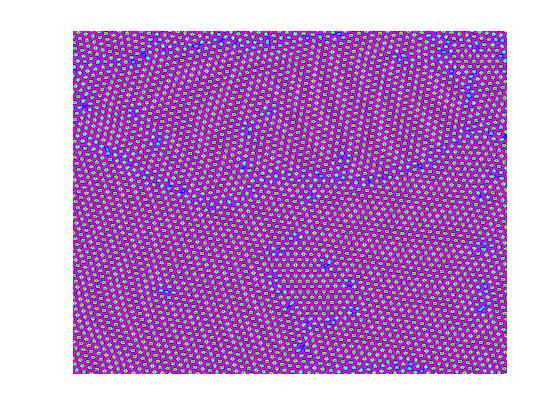}
}
\caption{Snapshots of the phase variable $\phi$ are taken at t=0, 200, 250, 350, 400, 500 for example 4.}\label{fig:fig5}
\end{figure}
\begin{figure}[htp]
\centering
\includegraphics[width=10cm,height=7cm]{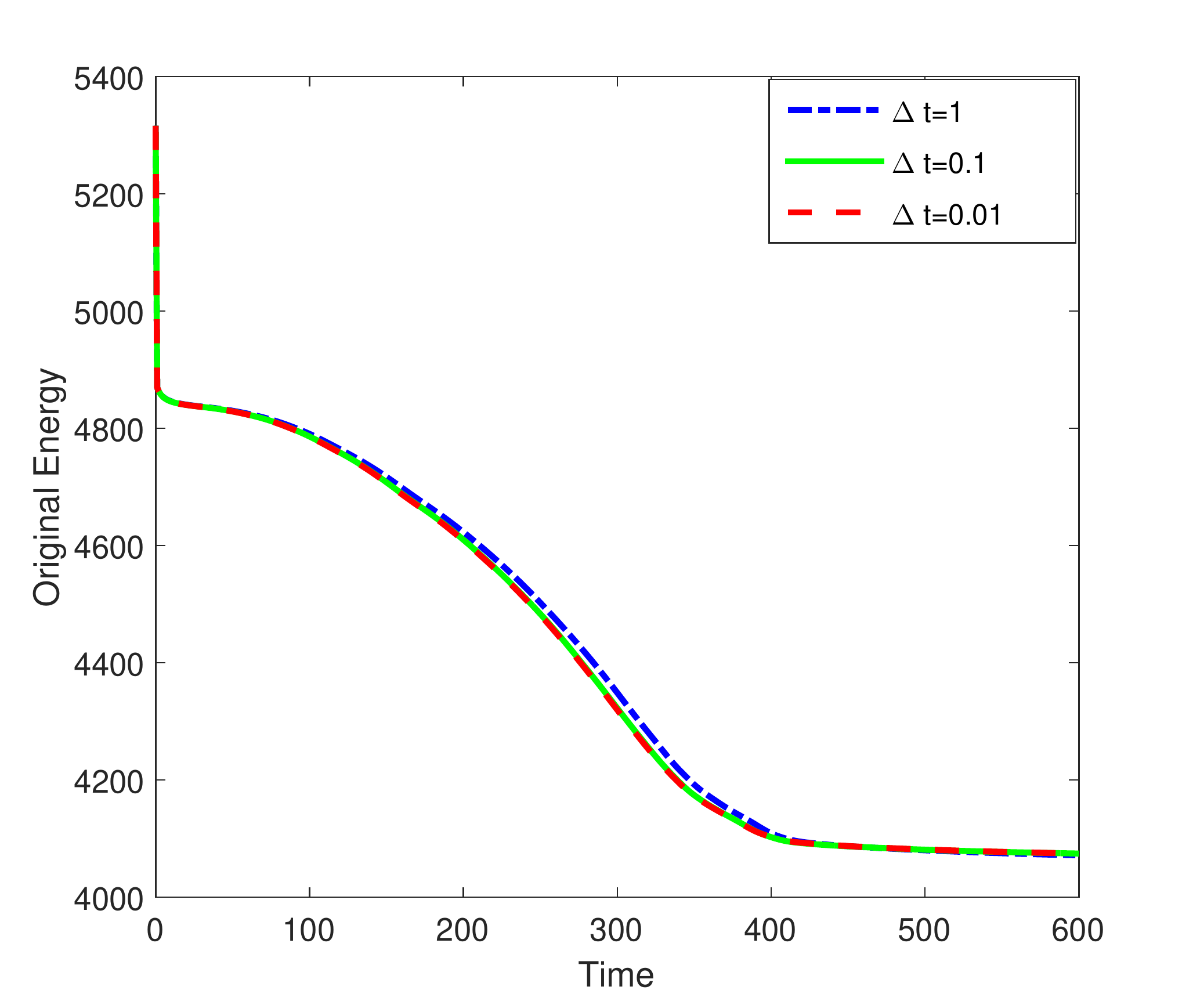}
\caption{Energy evolution of PFC model for example 4 with $\Delta t=0.01$, $0.1$ and $1$.}\label{fig:fig6}
\end{figure}
\section*{Acknowledgement}
No potential conflict of interest was reported by the author. We would like to acknowledge the assistance of volunteers in putting together this example manuscript and supplement.
% \section*{References}
\bibliographystyle{siamplain}
\bibliography{Reference}

\end{document}